\def\R{\Bbb R}
\def\N{\Bbb N}
\def\H{\Bbb H}
\def\Z{\Bbb Z}
\numberwithin{equation}{section}
\numberwithin{figure}{section}
\renewcommand{\leq}{\leqslant}
\renewcommand{\le}{\leqslant}
\renewcommand{\geq}{\geqslant}
\renewcommand{\ge}{\geqslant}
\newtheorem{theorem}{Theorem}[section]
\newtheorem{lemma}[theorem]{Lemma}
\newtheorem{corollary}[theorem]{Corollary}
\newtheorem{proposition}[theorem]{Proposition}
\newtheorem*{theorem*}{Theorem}
\newtheorem*{corollary*}{Corollary}
\newcommand{\arccosh}{\mathrm{arccosh} \,} 
\title{Napoleonic Constructions in the Hyperbolic Plane}
\author{Serena Dipierro}
\author{Lyle Noakes}
\author{Enrico Valdinoci}
\address{Department of Mathematics and Statistics,
University of Western Australia, 35 Stirling Highway,
WA6009 Crawley, Australia}
\email{serena.dipierro@uwa.edu.au, lyle.noakes@uwa.edu.au, enrico.valdinoci@uwa.edu.au}
\begin{document}
\maketitle

\begin{abstract}
In the Euclidean setting, Napoleon's Theorem states that if one constructs an equilateral triangle on either the outside or the inside of each side of a given triangle and then connects the barycenters of those three new triangles, the resulting triangle
happens to be equilateral.

The case of spherical triangles has been recently shown to be different: on the sphere,
besides equilateral triangles, a necessary and sufficient condition for a given triangle
to enjoy the above Napoleonic property is that its congruence class should lie on a suitable surface
(namely, an ellipsoid in suitable coordinates).

In this article we show that the hyperbolic case is significantly different from both the Euclidean and the spherical setting. Specifically, we establish here that the hyperbolic plane does not admit any Napoleonic triangle, except the equilateral ones. Furthermore, we prove that iterated Napoleonization of any triangle causes it to 
become smaller and smaller, more and more
equilateral and converge to a single point in the limit.
\end{abstract}

\section{Introduction}

The {\em Napoleonic construction} deals with triangles on a surface and
proceeds according to the following steps:
\begin{itemize} \item A triangle~$P_0P_1P_2$ in a two-dimensional surface is given.
\item Three equilateral triangles are constructed on the sides of~$P_0P_1P_2$: namely,
one takes points~$Q_0$, $Q_1$ and~$Q_2$ on the surface such that
the triangles~$P_0P_1Q_2$, $P_1P_2Q_0$ and~$P_0P_2Q_1$ are equilateral
(two different constructions arise, according to the direction chosen for the points~$Q_0$, $Q_1$ and~$Q_2$).
\item The centroids (i.e. barycenters) $R_0$, $R_1$ and~$R_2$ of the equilateral triangles~$P_1P_2Q_0$, $P_0P_2Q_1$ and~$P_0P_1Q_2$ are considered and
the triangle~$R_0R_1R_2$ is called the {\em{Napoleonization}} of~$P_0P_1P_2$.
\item If the Napoleonization~$R_0R_1R_2$ is an equilateral triangle, then the initial triangle~$P_0P_1P_2$ is called {\em Napoleonic}.
\end{itemize}

The classical case of this construction occurs when the ambient surface is the {\em Euclidean plane}. In this situation, {\em all triangles are Napoleonic}: this is a famous result going under the name of Napoleon's Theorem: see e.g.~\cite{MR2928662} and the references therein for the fascinating history of this result (see also Figures~\ref{FIG1} and~\ref{FIG2} for a sketch of the Napoleonic constructions in the Euclidean plane).

\begin{figure}[t]
\centering
\includegraphics[width=0.3\textwidth]{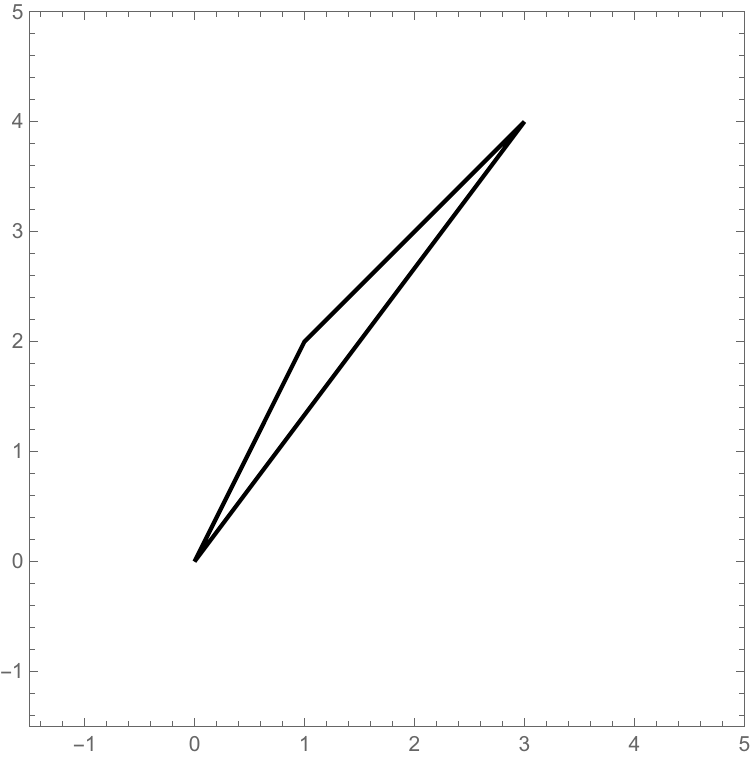}$\;$
\includegraphics[width=0.3\textwidth]{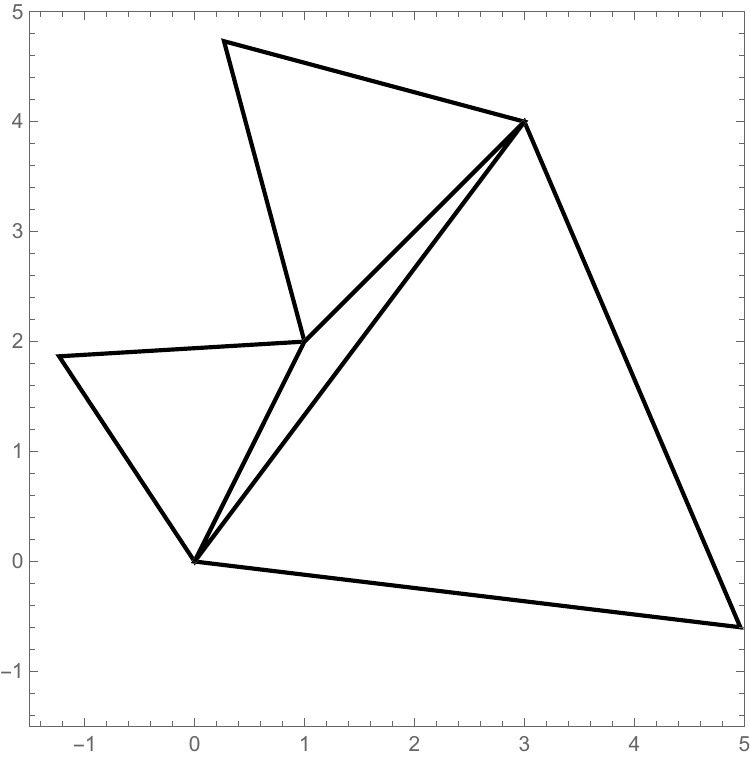}$\;$
\includegraphics[width=0.3\textwidth]{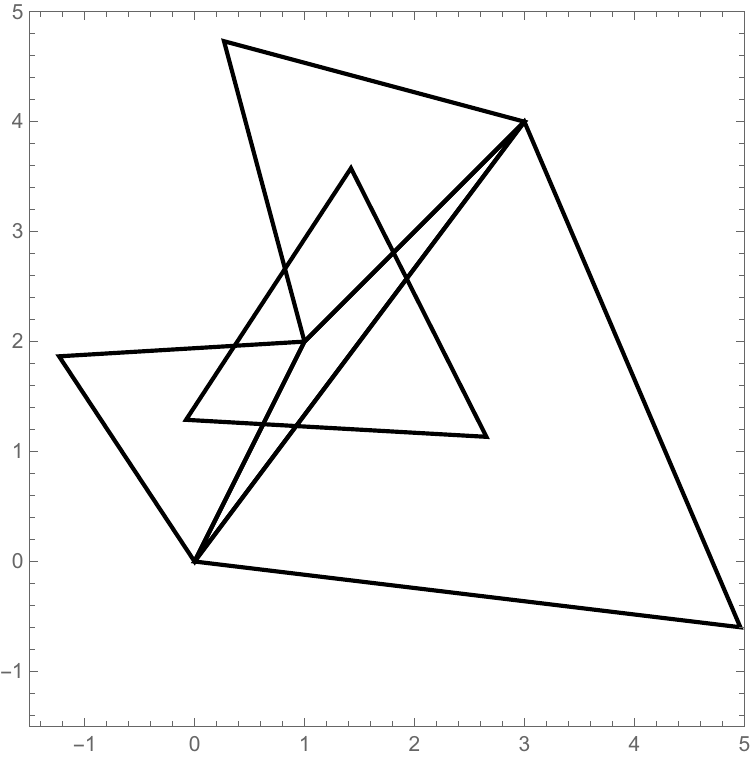}
\caption{External Napoleonization of the Euclidean triangle with vertices in~$(0,0)$, $(1,2)$ and~$(3,4)$.}\label{FIG1}
\end{figure}

In the Euclidean case, Napoleon's Theorem attracted the attention of several first-rate mathematicians, including Fields Medallist Jesse Douglas; in fact, the question of
extending Napoleon's Theorem from planar triangles to polygons is known as the
Petr-Douglas-Neumann problem, see~\cite{zbMATH02641104, MR2178, MR6839}.
Napoleon's Theorem also finds practical applications in some optimization questions,
such as the Fermat-Steiner-Torricelli problem, see~\cite{MR4298718}.

\begin{figure}[b]
\centering
\includegraphics[width=0.3\textwidth]{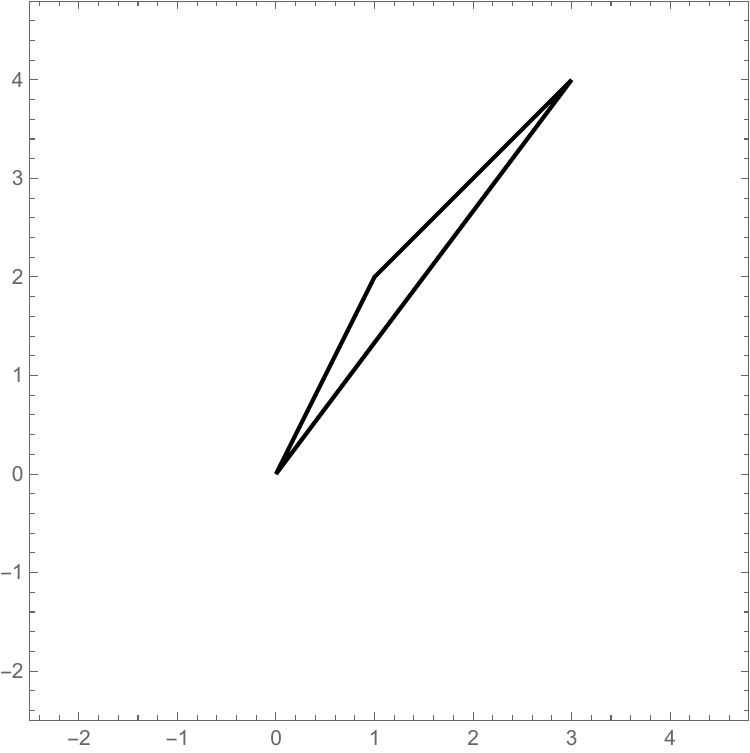}$\;$
\includegraphics[width=0.3\textwidth]{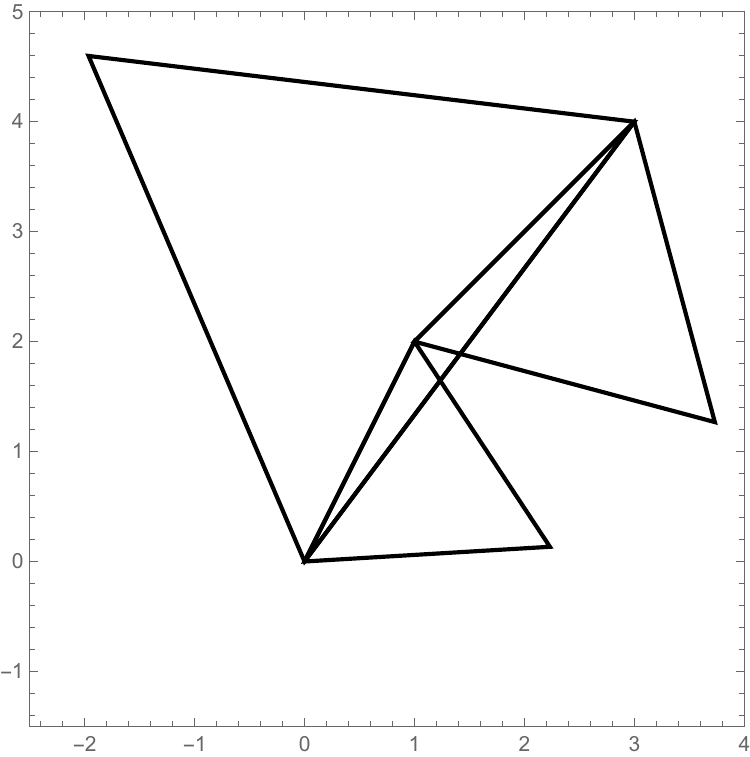}$\;$
\includegraphics[width=0.3\textwidth]{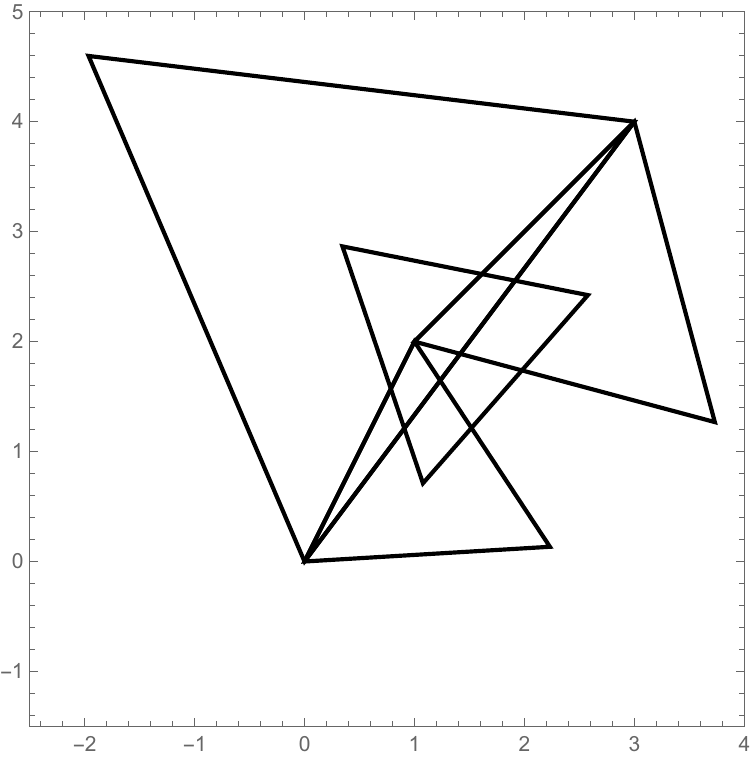}
\caption{Internal Napoleonization of the Euclidean triangle with vertices in~$(0,0)$, $(1,2)$ and~$(3,4)$.}\label{FIG2}
\end{figure}

Among the many modern extensions of Napoleon's Theorem, a natural field of investigation is the discovery and classification of Napoleonic triangles in ambient surfaces different from the Euclidean plane. This happens to be a rather difficult problem about which little is known.

So far, the only ambient manifold for which all Napoleonic triangles have been classified is the {\em round sphere}. Specifically, in~\cite{MR4727700} it is established that if a spherical triangle is Napoleonic then either it is equilateral or its 
congruency class lies in an explicit surfaces, which, in an appropriate coordinate system, can be written as a {\em two-dimensional rotational ellipsoid} (and, conversely, all 
congruency classes in this ellipsoid 
correspond to Napoleonic triangles). What is more, {\em all non-equilateral Napoleonic triangles on the sphere produce congruent Napoleonizations}. 

The case of Napoleonic constructions in the {\em hyperbolic plane} appears then as a natural question. So far, to the best of our knowledge, the only available results in this setting go back to~\cite{MR1847491}
and deal with an infinite sequence of recursively
defined hyperbolic triangles (as explicitly mentioned in~\cite{MRev}
this construction is inspired by Napoleon's Theorem, but structurally quite different from it). In particular, no specific investigation of Napoleonic constructions in the hyperbolic plane has been carried out till now.

The goal of this paper is to fill this gap.
For concreteness, as a model for the hyperbolic plane
we consider here the upper sheet of the unit hyperboloid
in the Minkowski space (see Section~\ref{MINK} for details).
Our first result states that the only Napoleonic triangles are the trivial ones
(i.e., the ones for which the initial triangle was equilateral):

\begin{theorem}\label{non-exis}
If the Napoleonization of a hyperbolic triangle is equilateral,
then the initial triangle is equilateral too.
\end{theorem}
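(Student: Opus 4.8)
We work in the hyperboloid model, representing the congruence class of a triangle by the Gram products $c_i:=\langle P_j,P_k\rangle=-\cosh a_i$ of its (future‑pointing, unit timelike) vertices, where $a_i$ is the length of the side opposite $P_i$; it is convenient to set $t_i:=\sqrt{1-2c_i}=\sqrt{1+2\cosh a_i}$. I would first establish two elementary facts. (a) The centroid of a hyperbolic triangle $ABC$ — the intersection of its medians, hence in particular the center of an equilateral triangle — equals $(A+B+C)/\sqrt{-\langle A+B+C,A+B+C\rangle}$ (check that the median from $A$ consists of the normalizations of the vectors $\alpha A+\beta(B+C)$, and intersect two medians). (b) The apex $Q_i$ of the equilateral triangle erected on the side opposite $P_i$ is $Q_i=\cosh h_i\,M_i+\varepsilon\sinh h_i\,n_i$, where $M_i=(P_j+P_k)/|P_j+P_k|$ is the midpoint of that side, $n_i$ is the Minkowski‑unit normal to the side (cyclically oriented, $n_i\parallel P_j\times P_k$), $\cosh h_i=\cosh a_i/\cosh(a_i/2)$, and $\varepsilon=\pm1$ selects between the two Napoleonic constructions: after fixing the orientation so that $V:=\det[P_0,P_1,P_2]>0$, one has $\varepsilon=-1$ for the ``external'' and $\varepsilon=+1$ for the ``internal'' choice.

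Substituting into $R_i=(P_j+P_k+Q_i)/\sqrt{3(1-2c_i)}$ and expanding $\langle R_i,R_j\rangle$ by means of the Minkowski Lagrange and triple‑product identities, the normalizing denominators simplify and (with $u_i:=1+t_i^2=2(1-c_i)$, $\Sigma:=c_0+c_1+c_2-1$, and $V=\sqrt{1-c_0^2-c_1^2-c_2^2-2c_0c_1c_2}$) a computation yields, for $\{i,j,k\}=\{0,1,2\}$,
\[
\langle R_i,R_j\rangle=\frac{4\,N_{ij}}{3\,u_iu_j},\qquad N_{ij}:=t_it_j\,\Sigma-(c_k+c_ic_j)+\varepsilon V\,(t_i+t_j).
\]
Since $\langle R_i,R_j\rangle=-\cosh\operatorname{dist}(R_i,R_j)$, the Napoleonization $R_0R_1R_2$ is equilateral precisely when $u_2N_{01}=u_0N_{12}=u_1N_{20}$. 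The key observation is that, putting $A:=t_0t_1t_2-t_0-t_1-t_2$ and $Q:=t_0t_1+t_1t_2+t_2t_0-1$, all three pairwise differences factor through the single, fully symmetric quantity $\Phi:=\Sigma A+\varepsilon V Q$:
\[
u_2N_{01}-u_0N_{12}=(t_2-t_0)\,\Phi,\qquad u_0N_{12}-u_1N_{20}=(t_0-t_1)\,\Phi,
\]
and similarly for the third. Therefore the Napoleonization is equilateral if and only if $\Phi=0$ or $t_0=t_1=t_2$, and the theorem reduces to excluding $\Phi=0$ for non‑equilateral triangles.

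For a genuine triangle one has $t_i>\sqrt3$, hence $\Sigma=\tfrac12(1-t_0^2-t_1^2-t_2^2)<0$, $Q>0$, $V>0$ (non‑degeneracy), and $A=t_0t_1t_2-(t_0+t_1+t_2)>0$ (since $A$ is strictly increasing in each $t_i$ on $t_i\ge\sqrt3$ and vanishes only at $t_0=t_1=t_2=\sqrt3$). For the external construction ($\varepsilon=-1$) this already gives $\Phi=\Sigma A-VQ<0$, so $\Phi\neq0$ and the Napoleonization is equilateral only in the trivial case. For the internal construction ($\varepsilon=+1$) the two summands of $\Phi=\Sigma A+VQ$ have opposite signs, and, since $|\Sigma|A$ and $VQ$ are both positive, $\Phi=0$ is equivalent (by squaring) to the equality case in
\[
\Sigma^2A^2\;\ge\;V^2Q^2\qquad\text{on the set of valid triangles, with equality if and only if }t_0=t_1=t_2 .
\]
Expanding both sides in the elementary symmetric functions of $t_0,t_1,t_2$ (and recalling that $V^2>0$ is exactly the system of strict triangle inequalities $|a_i-a_j|<a_k<a_i+a_j$), this becomes a symmetric polynomial inequality of degree $10$, which I would prove by exhibiting $\Sigma^2A^2-V^2Q^2$ as a combination $\sum_{i<j}(t_i-t_j)^2g_{ij}$ with each $g_{ij}$ manifestly nonnegative on the admissible region, so that the expression vanishes only when all $t_i$ coincide. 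This last inequality — equivalently, the statement that the surface $\{\Phi=0\}$ meets the region of valid triangles only along the equilateral curve — is the crux of the proof and the part I expect to be the most delicate; everything before it is bookkeeping with explicit hyperbolic‑trigonometry identities.
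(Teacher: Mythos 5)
Your proposal follows essentially the same route as the paper: the same coordinates $t_i=\sqrt{1-2\langle P_j,P_k\rangle}$ (the paper's $d_i$), the same centroid formula, the same factorization of the pairwise differences of $\langle R_i,R_j\rangle$ through the single symmetric quantity $\Phi=\Sigma A+\varepsilon VQ$, and the same reduction to a symmetric polynomial inequality obtained by squaring. The one step you leave as a plan --- writing $\Sigma^2A^2-V^2Q^2$ as a manifestly nonnegative combination of the $(t_i-t_j)^2$ --- is exactly what the paper carries out, obtaining the factor $\frac{\gamma}{24}\big((d_0-d_1)^2+(d_1-d_2)^2+(d_2-d_0)^2\big)\big(d_0^2+d_1^2+d_2^2+d_0d_1+d_1d_2+d_2d_0-2\big)$, which is strictly positive since each $d_i\geq\sqrt3$; so your predicted form of the identity is correct and the argument closes as you describe.
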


We stress that the hyperbolic case dealt with in Theorem~\ref{non-exis} is surprisingly different both from the Euclidean case (in which all Napoleonizations are equilateral)
and the spherical case (in which an ellipsoid of parameters produces non-trivial cases
of equilateral Napoleonizations). We believe that the structural differences of
Napoleon-like results depending on the geometry of the ambient surface is indeed a noteworthy phenomenon and a brand-new line of investigation which deserves
a deeper understanding.\medskip

Another question of interest in this setting is what happens after repeated Napoleonizations, i.e. by taking a hyperbolic triangle to start with, and applying
the Napoleonic construction over and over. 
For that target, given a hyperbolic triangle~$P_0P_1P_2$, we denote
its Napoleonization by~$P_0^{(1)}P_1^{(1)}P_2^{(1)}$, and recursively we set~$P_0^{(k)}P_1^{(k)}P_2^{(k)}$ to be the Napoleonization of~$P_0^{(k-1)}P_1^{(k-1)}P_2^{(k-1)}$.
In this framework, our result
goes as follows:

\begin{theorem}\label{REPENE}
As~$k$ increases, the triangles~$P_0^{(k)}P_1^{(k)}P_2^{(k)}$ become smaller, more
nearly equilateral and, as~$k\rightarrow +\infty$, more nearly a single point.  
\end{theorem}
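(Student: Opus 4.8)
The plan is to isolate a single scalar ``size'' functional on congruence classes of triangles which is strictly decreased by Napoleonization, and then to run a compactness argument. For a hyperbolic triangle $T$ with side lengths $\ell_{01},\ell_{12},\ell_{02}$ I would work with
\[
\Phi(T):=\cosh\ell_{01}+\cosh\ell_{12}+\cosh\ell_{02}-3\,,
\]
which in the Minkowski model of Section~\ref{MINK} is simply $-\langle p_0,p_1\rangle-\langle p_1,p_2\rangle-\langle p_0,p_2\rangle-3$; it is nonnegative, vanishes exactly when $T$ degenerates to a single point, and controls all three side lengths. The central lemma to establish is: \textbf{for every non-degenerate triangle, and — by continuity of the construction — for every configuration of three distinct collinear points as well, one has $\Phi(\mathcal N(T))<\Phi(T)$}, where $\mathcal N$ denotes either of the two Napoleonizations. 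This is where the explicit formulas are needed: one normalizes, by an isometry, one vertex to the ``north pole'' of the hyperboloid, writes the apex $q_i$ of the equilateral triangle erected on each side in the form $\cosh h_i\,m_i-\sinh h_i\,n_i$ with $m_i$ the (normalized) midpoint of the side, $n_i$ the relevant unit normal, and $\cosh h_i=\cosh\ell_i/\cosh(\ell_i/2)$, then forms the normalized barycenters $r_i=(p_j+p_k+q_i)/\bigl|p_j+p_k+q_i\bigr|$ (a future-timelike sum, so the Minkowski norm never vanishes), and finally computes the Gram matrix $\langle r_i,r_j\rangle$, hence $\Phi(\mathcal N(T))$, as an explicit function of $\ell_{01},\ell_{12},\ell_{02}$.

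The main obstacle is precisely proving this inequality globally. Two features should guide the argument. First, in the small-triangle (Euclidean) limit $\Phi(T)\approx\tfrac12(a^2+b^2+c^2)$ and, using the classical side-length formula for the Euclidean Napoleon triangle, $\Phi(\mathcal N(T))\approx\tfrac14(a^2+b^2+c^2)\pm\sqrt3\,S$ for the external/internal construction; thus at leading order the desired inequality is exactly Weitzenböck's inequality $a^2+b^2+c^2\ge4\sqrt3\,S$, with equality only for equilateral triangles, so one expects a hyperbolic Weitzenböck-type estimate to be the true engine, with the genuinely hyperbolic higher-order terms breaking the surviving equality case (the equilateral triangles). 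Second, by the $3$-fold symmetry of the construction the verification reduces, after normalization, to a two-parameter family, and monotonicity or convexity in one of the parameters should further reduce it to the isosceles and equilateral sub-families; on the equilateral one-parameter family the statement is just that the Napoleonization of the equilateral triangle of side $a$ has side strictly smaller than $a$, which can be checked directly — e.g. via an expansion $a'=a-c\,a^3+O(a^5)$ with $c>0$ near $a=0$, together with a separate estimate for large $a$.

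Granting the lemma, the convergence assertions follow quickly. Since $\Phi(T_{k+1})\le\Phi(T_k)$, the side lengths of all $T_k:=P_0^{(k)}P_1^{(k)}P_2^{(k)}$ are uniformly bounded, so their congruence classes live in a fixed compact space $\mathcal T$ of (possibly degenerate) triangles on which both $\mathcal N$ and $\Phi$ are continuous; moreover $\Phi(T_k)$ is non-increasing, hence $\Phi(T_k)\downarrow\Phi_\infty\ge0$. If $\Phi_\infty>0$, pick a subsequence with $T_{k_j}\to T_\infty\in\mathcal T$; then $\Phi(\mathcal N(T_\infty))=\lim_j\Phi(T_{k_j+1})=\Phi_\infty=\Phi(T_\infty)>0$, and $T_\infty$ is not a point, contradicting the lemma. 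Therefore $\Phi(T_k)\to0$, which is exactly the statement that the $T_k$ become smaller (indeed $\Phi$ strictly decreases at each step as long as $T_k$ is not a point) and, as $k\to+\infty$, collapse to a single point.

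Finally, for ``more nearly equilateral'': once $\Phi(T_k)\to0$ the triangle $T_k$ has diameter $\varepsilon_k\to0$ and can be read in a geodesic normal chart centred at a nearby point, where the hyperbolic metric is Euclidean up to an error $O(\varepsilon_k^2)$. Consequently the hyperbolic equilateral triangle erected on a side of length $\ell\le\varepsilon_k$ differs from the Euclidean one by $O(\ell^3)=O(\varepsilon_k^3)$, and the same holds for the barycenters, so $\mathcal N(T_k)$ agrees, in the chart, with the \emph{Euclidean} Napoleonization of $T_k$ up to $O(\varepsilon_k^3)$. Since the Euclidean Napoleonization of \emph{any} triangle is equilateral (Napoleon's Theorem), $T_{k+1}$ lies within $O(\varepsilon_k^3)$ of an equilateral triangle of size $O(\varepsilon_k)$; hence its relative deviation from equilaterality is $O(\varepsilon_k^2)=O(\Phi(T_k))\to0$, which is the remaining claim. $\endofproof$
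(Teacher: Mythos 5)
Your overall strategy (a Lyapunov functional that strictly decreases under Napoleonization, followed by a compactness/LaSalle argument) is a legitimate skeleton, and your $\Phi$ is essentially the paper's coordinates in disguise: with $d_i=\sqrt{1-2\langle P_{i+1},P_{i+2}\rangle}$ one has $\Phi=\tfrac12\sum_i(d_i^2-3)$. The genuine gap is that your ``central lemma'' $\Phi(\mathcal N(T))<\Phi(T)$ is never proved, and it \emph{is} the theorem: everything else is routine. The heuristics you offer cannot close it. In the Euclidean limit the external Napoleonization of an equilateral triangle is congruent to the original (it only rotates), so to leading order $\Phi(\mathcal N(T))=\Phi(T)$ on precisely the configurations the iteration approaches; whether the inequality is strict there is decided by curvature corrections that you never compute. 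This is exactly the difficulty the paper flags in the Introduction (the setting becomes ``more and more Euclidean'' along the iteration), and it is why the paper does not argue by soft compactness but instead establishes explicit, uniform contraction estimates in the $d_i$-coordinates: $\max_i(e_i^2-3)\le\tfrac{7}{12}\max_i(d_i^2-3)$ for $\epsilon=1$ (see~\eqref{contreq}), and for $\epsilon=-1$ a two-stage bootstrap combining the bound $r_i\le\rho\approx0.933$ of Proposition~\ref{REPEPRO} --- which gives geometric decay of the differences $|d_{i+1}^2-d_{i+2}^2|$ and is the quantitative content of ``more nearly equilateral'' --- with a recursively defined sequence of upper bounds $\beta_p\downarrow3$. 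Your proposed reduction to isosceles and equilateral sub-families via ``monotonicity or convexity in one of the parameters'' is asserted, not established, and you give no control whatsoever for large triangles.

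Two further problems, smaller but real. First, the LaSalle argument needs the strict-decrease lemma on the whole compact space of possibly degenerate configurations with $\Phi>0$: you address collinear triples of distinct points, but not limits in which two vertices coincide, which the lemma must also cover. Second, the closing paragraph on ``more nearly equilateral'' fails for one of the two constructions: the Euclidean \emph{internal} Napoleonization of a nearly equilateral triangle of diameter $\varepsilon$ is nearly a single point (its side is $\sqrt{(a^2+b^2+c^2-4\sqrt3\,S)/6}$, which vanishes exactly at equilateral), so knowing that $T_{k+1}$ is within $O(\varepsilon_k^3)$ of such a triangle gives no bound on its \emph{relative} deviation from equilaterality. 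In the hyperboloid model this is the case $\epsilon=1$, where the paper's formula~\eqref{r1r2} shows that the Napoleonization of an equilateral triangle is literally one point; the paper avoids the issue entirely by contracting the absolute differences of the $d_i^2$ at the uniform rate $\rho$.
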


We point out that Theorem~\ref{REPENE} is in sharp contrast with the Euclidean case
(flat triangles remain equilateral and do not contract under repeated Napoleonizations, actually they just rotate by~$60^\circ$, up to relabeling vertices).
The comparison with the Euclidean case also highlights an unavoidable difficulty
intrinsically linked to the proof of Theorem~\ref{REPENE}: indeed, if
repeated Napoleonizations tend to approach a point, the setting becomes ``more and more Euclidean'' during the iteration, thus making the convergence to a point problematic precisely when we approach the limit.

\medskip

The rest of the paper is organized as follows. 
In Section~\ref{MINK} we gather some preliminary observations
on the hyperboloid model and the hyperbolic triangles.
In Section~\ref{u4i3tifgesec:bespoke}
we introduce a bespoke set of hyperbolic coordinates, which are different
from the standard hyperbolic distance~$\arccosh(-\langle\cdot,\cdot\rangle)$
and come in handy to simplify several otherwise cumbersome calculations.
Sections~\ref{sec:profthem1} and~\ref{sec:proofthm2}
contain the proofs of Theorems~\ref{non-exis}
and~\ref{REPENE}, respectively.

\section{Preliminaries on the Hyperboloid Model and the Hyperbolic Triangles}\label{MINK}

We recall that the Minkowski inner product~$\langle ~,~\rangle$ on~$\R ^3$ is given by 
$$\langle (v_1,v_2, v_3)^{\bf T},(w_1,w_2,w_3)^{\bf T}\rangle :=-v_1w_1+v_2w_2+v_3w_3.$$
Equivalently \begin{equation*}
\langle v,w\rangle = (Jv)\cdot w=v\cdot (Jw),\end{equation*} where~$\cdot$ denotes the Euclidean inner product and~$J$ is the~$3\times 3$ 
diagonal matrix
$$\left[ \begin{array}{ccc}
-1&0&0\\
0&1&0\\
0&0&1
\end{array}\right] .$$
The (upper unit) hyperboloid  is  
$$\H := \big\{ P\in \R ^3\;{\mbox{ s.t. }}\;\langle P,P\rangle =-1\;{\mbox{ and }}\; \langle P,E_1\rangle \geq 1\big\} $$
where~$E_1$ is the timelike vector~$(-1,0,0)^{\bf T}$. Moreover, we denote by~$E_2:=(0,1,0)^{\bf T}$ and~$ E_3:=(0,0,1)^{\bf T}$. 

As customary\footnote{For an elementary proof of~\eqref{csinq}, one can write~$P=(p,\tilde P)$ and~$Q=(q,\tilde Q)$,
with~$p$, $q\in[1,+\infty)$ and~$\tilde P$, $\tilde Q\in\R^2$, and use
the standard Cauchy-Schwarz inequality.
Indeed, we note that, for all~$a$, $b\in\R$,
$$2pqab\le p^2a^2+q^2b^2.$$
Also, choosing~$a:=\sqrt{\tilde Q\cdot \tilde Q}$ and~$b:=\sqrt{\tilde P\cdot \tilde P}$,
we see that~$1=-\langle\tilde P, \tilde P\rangle=p^2-b^2$, and similarly~$1=q^2-a^2$, yielding that~$p\ge b$ and~$q\ge a$ and that
\begin{eqnarray*}
&&1=(p^2-b^2)(q^2-a^2)=p^2q^2+a^2b^2-p^2a^2-q^2b^2\le
p^2q^2+a^2b^2-2pqab=(pq-ab)^2.
\end{eqnarray*}
Consequently, $pq\ge ab$ and~$pq-ab\ge1$.
Thus, since
\begin{eqnarray*}&&
-\langle P,Q\rangle=
pq-\tilde P\cdot \tilde Q
\ge pq-\sqrt{(\tilde P\cdot \tilde P)(\tilde Q\cdot \tilde Q)}=
pq-ab,
\end{eqnarray*}
the classical inequality in~\eqref{csinq} plainly follows
(and actually, tracing the equality cases in the above inequality, one also gets
that equality in~\eqref{csinq} holds if and only if~$P=Q$).} we observe that if~$P$, $Q\in\H$, then,
\begin{equation}\label{csinq}
\langle P,Q\rangle\le-1. \end{equation}

The hyperbolic cross-product~$\tilde \times$ of vectors in Minkowski space~$\R ^3$ is given by 
$$v\tilde \times w :=J(v\times w)$$
where~$\times $ is the Euclidean cross-product. 

Since~$J$ is orthogonal with determinant~$-1$, we deduce from the formula
$$ (Jv)\times (Jw)= {\rm{det}}(J)  (J^{-1})^T (v\times w) =-J(v\times w)$$ that, for any~$v$, $w$, $v'$, $w'\in \R ^3$,
$$\langle v\tilde \times w,v'\tilde \times w'\rangle = 
J(v\times w) \cdot (v'\tilde \times w')=-((Jv)\times (Jw))\cdot(v'\times w').$$
Therefore, using the Binet-Cauchy Identity,
\begin{equation}\label{eq0}\begin{split}
\langle v\tilde \times w,v'\tilde \times w'\rangle 
&=- ((Jv)\cdot v') ((Jw)\cdot w') + ((Jv)\cdot w') ((Jw)\cdot v')
\\&=-\langle v,v'\rangle \langle w,w'\rangle +\langle v,w'\rangle \langle w,v'\rangle .
\end{split}\end{equation}
The hyperbolic scalar triple product~$\langle u, v\tilde \times w\rangle$ of~$u$, $v$, $w\in \R ^3$ is also the Euclidean scalar triple product~$u\cdot(v\times w)$, so it has the same symmetries.

Given two points on the hyperboloid, a third point can be written as a combination
of these two points and their cross-product. More explicitly, we have that:

\begin{lemma}\label{GT}
Let~$P_0\not= P_1\in \H$. Then, any~$Q\in \H$ can be written in the form 
\begin{equation}\label{QDE}
Q=a_0P_0+a_1P_1+bP_0\tilde \times P_1,\end{equation}
with
\begin{equation}\label{eq20}-a_0=\frac{\langle Q,P_0\rangle +\langle P_0,P_1\rangle \langle Q,P_1\rangle }{1-\langle P_0,P_1\rangle ^2}
\quad{\mbox{and}}\quad
-a_1=\frac{\langle Q,P_1\rangle+\langle P_0,P_1\rangle \langle Q,P_0\rangle}{1-\langle P_0,P_1\rangle ^2}.\end{equation} 
Also,
\begin{equation}\label{eq1}-1=\langle Q,Q\rangle = -a_0^2-a_1^2+2a_0a_1\langle P_0,P_1\rangle -b^2(1-\langle P_0,P_1\rangle ^2).\end{equation}
\end{lemma}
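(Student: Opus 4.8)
The plan is to prove Lemma~\ref{GT} by a direct linear-algebra computation, exploiting the fact that~$\{P_0,P_1,P_0\tilde\times P_1\}$ is a basis of~$\R^3$ whenever~$P_0\neq P_1$ lie on~$\H$. First I would check linear independence: by~\eqref{csinq} we know~$\langle P_0,P_1\rangle\le -1$, and tracing the equality case (as in the footnote) gives~$\langle P_0,P_1\rangle=-1$ iff~$P_0=P_1$, so under our hypothesis~$\langle P_0,P_1\rangle^2>1$, i.e.~$1-\langle P_0,P_1\rangle^2<0$. Then from~\eqref{eq0} with~$v=v'=P_0$ and~$w=w'=P_1$ we get~$\langle P_0\tilde\times P_1,P_0\tilde\times P_1\rangle=-\langle P_0,P_0\rangle\langle P_1,P_1\rangle+\langle P_0,P_1\rangle^2=-1+\langle P_0,P_1\rangle^2>0$, so~$P_0\tilde\times P_1$ is a nonzero spacelike vector; since it is also Minkowski-orthogonal to both~$P_0$ and~$P_1$ (again by~\eqref{eq0}, or directly because the scalar triple product vanishes when two arguments coincide), and~$P_0,P_1$ span a plane on which~$\langle\cdot,\cdot\rangle$ is nondegenerate (its Gram determinant is~$\langle P_0,P_1\rangle^2-1\neq 0$), the three vectors are linearly independent and hence a basis. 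Therefore every~$Q\in\R^3$, in particular every~$Q\in\H$, can be written as in~\eqref{QDE} for unique coefficients~$a_0,a_1,b$.

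Next I would pin down the coefficients. Taking the Minkowski inner product of~\eqref{QDE} with~$P_0$ and with~$P_1$, and using~$\langle P_0,P_0\rangle=\langle P_1,P_1\rangle=-1$ together with~$\langle P_0\tilde\times P_1,P_0\rangle=\langle P_0\tilde\times P_1,P_1\rangle=0$, yields the linear system
\begin{equation*}
\langle Q,P_0\rangle=-a_0+a_1\langle P_0,P_1\rangle,\qquad
\langle Q,P_1\rangle=a_0\langle P_0,P_1\rangle-a_1.
\end{equation*}
Solving this~$2\times2$ system (its determinant is~$1-\langle P_0,P_1\rangle^2\neq0$) gives precisely the formulas for~$-a_0$ and~$-a_1$ claimed in~\eqref{eq20}. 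For~\eqref{eq1} I would simply expand~$\langle Q,Q\rangle$ using bilinearity on the representation~\eqref{QDE}: the cross terms~$\langle P_0,P_0\tilde\times P_1\rangle$ and~$\langle P_1,P_0\tilde\times P_1\rangle$ vanish, $\langle P_0,P_0\rangle=\langle P_1,P_1\rangle=-1$, $\langle P_0,P_1\rangle$ appears with coefficient~$2a_0a_1$, and~$\langle P_0\tilde\times P_1,P_0\tilde\times P_1\rangle=\langle P_0,P_1\rangle^2-1$ by the computation above, so
\begin{equation*}
\langle Q,Q\rangle=-a_0^2-a_1^2+2a_0a_1\langle P_0,P_1\rangle+b^2\big(\langle P_0,P_1\rangle^2-1\big),
\end{equation*}
which is exactly~\eqref{eq1} once one recalls~$\langle Q,Q\rangle=-1$ and rewrites~$\langle P_0,P_1\rangle^2-1=-(1-\langle P_0,P_1\rangle^2)$.

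The computation is essentially routine; the only point requiring a little care — and the one I would treat as the ``main obstacle'' — is verifying the nondegeneracy facts cleanly, namely that~$1-\langle P_0,P_1\rangle^2\neq0$ (so that the~$2\times2$ system is solvable and the representation~\eqref{QDE} is unique) and that~$\langle P_0\tilde\times P_1,P_0\tilde\times P_1\rangle=\langle P_0,P_1\rangle^2-1$. Both follow from~\eqref{csinq}, its equality-case analysis (recorded in the footnote), and the identity~\eqref{eq0}, so no genuinely new idea is needed; it is just a matter of assembling these pieces in the right order. Everything else is bilinear bookkeeping.
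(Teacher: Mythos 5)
Your proposal is correct and follows essentially the same route as the paper: nondegeneracy of the denominator via~\eqref{csinq}, the bilinear expansion of~$\langle Q,Q\rangle$ using~\eqref{eq0} for~\eqref{eq1}, and the~$2\times2$ linear system from pairing~\eqref{QDE} with~$P_0$ and~$P_1$ for~\eqref{eq20}. The only difference is that you explicitly verify that~$\{P_0,P_1,P_0\tilde\times P_1\}$ is a basis of~$\R^3$ (so that the representation~\eqref{QDE} actually exists), a point the paper leaves implicit; this is a small but welcome addition in rigor rather than a different argument.
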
 

\begin{proof} We point out that, in light of~\eqref{csinq} and the fact that~$P_0\not= P_1$,
$$ 1-\langle P_0,P_1\rangle ^2\neq 0,$$
and therefore the coefficients in~\eqref{eq20} are well defined.

Now, using the facts that~$\langle P_0,P_0\rangle=-1=\langle P_1,P_1\rangle$ 
and that~$\langle P_0,P_0\tilde \times P_1\rangle= 0=\langle P_1,P_0\tilde \times P_1\rangle$
and~\eqref{eq0}, we see that
\begin{eqnarray*}
\langle Q,Q\rangle &=&\langle a_0P_0+a_1P_1+bP_0\tilde \times P_1, a_0P_0+a_1P_1+bP_0\tilde \times P_1\rangle \\&=&
-a_0^2-a_1^2+2a_0a_1 \langle P_0,P_1\rangle +b^2\langle P_0\tilde \times P_1, P_0\tilde \times P_1\rangle\\&=&
-a_0^2-a_1^2+2a_0a_1 \langle P_0,P_1\rangle +b^2
(\langle P_0, P_1\rangle^2-1),
\end{eqnarray*}
which gives the claim in~\eqref{eq1}.

Thus, noticing that
\begin{eqnarray*}&&
-a_0+a_1\langle P_0,P_1\rangle =\langle Q,P_0\rangle \\ {\mbox{and }}&&
a_0\langle P_0,P_1\rangle -a_1=\langle Q,P_1\rangle ,
\end{eqnarray*}
we obtain~\eqref{eq20}, as desired.\end{proof}

The case of isosceles and equilateral triangles on the hyperboloid are particular cases of Lemma~\ref{GT} and go as follows:

\begin{corollary}
Let~$P_0P_1Q$ be isosceles with~$P_0\ne P_1$ and~$\langle P_0,Q\rangle =\langle P_1,Q\rangle$. 

Then, $Q$ can be written as in~\eqref{QDE}, with
\begin{equation}\label{eq2}
-a_0=-a_1=-a:=\frac{\langle P_0,Q\rangle}{1-\langle P_0,P_1\rangle}.
\end{equation}
\end{corollary}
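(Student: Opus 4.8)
The plan is to read this off directly from Lemma~\ref{GT}. First I would invoke Lemma~\ref{GT} with the given~$P_0\ne P_1\in\H$ and the given~$Q\in\H$: this produces the representation~\eqref{QDE} for~$Q$, with~$a_0$ and~$a_1$ determined by~\eqref{eq20}. The isosceles hypothesis is exactly the identity~$\langle Q,P_0\rangle=\langle Q,P_1\rangle$. Denoting this common value by~$c$ and writing~$t:=\langle P_0,P_1\rangle$ for brevity, the two expressions in~\eqref{eq20} collapse to the single expression
\[
-a_0=-a_1=\frac{c+tc}{1-t^2}=\frac{c\,(1+t)}{(1-t)(1+t)}.
\]

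Next I would cancel the factor~$1+t$ from numerator and denominator and conclude
\[
-a_0=-a_1=\frac{c}{1-t}=\frac{\langle P_0,Q\rangle}{1-\langle P_0,P_1\rangle},
\]
which is precisely the claimed formula~\eqref{eq2}.

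The only step requiring justification --- and the mildest of obstacles --- is that the cancellation above is legitimate, i.e.\ that~$1+t\ne0$. This is exactly the statement that~$\langle P_0,P_1\rangle\ne-1$, which holds because equality in~\eqref{csinq} occurs only when the two points coincide (as recorded in the footnote to~\eqref{csinq}), while here~$P_0\ne P_1$. The same observation gives~$1-t>0$, so the right-hand side of~\eqref{eq2} is well defined. No further computation is needed.
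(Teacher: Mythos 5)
Your proposal is correct and follows exactly the paper's route: the paper likewise just observes that under the hypothesis $\langle Q,P_0\rangle=\langle Q,P_1\rangle$ the two formulas in~\eqref{eq20} collapse to~\eqref{eq2}. Your extra remark that the cancellation of $1+\langle P_0,P_1\rangle$ is justified by the strict inequality $\langle P_0,P_1\rangle<-1$ from~\eqref{csinq} (since $P_0\ne P_1$) is a welcome, if minor, addition of rigor that the paper leaves implicit.
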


\begin{proof} When~$\langle P_0,Q\rangle =\langle P_1,Q\rangle$,
we have that~\eqref{eq20} reduces to~\eqref{eq2}.
\end{proof}

\begin{corollary}\label{CORO2}
Let~$P_0P_1Q_2$ be equilateral with~$P_0\ne P_1\ne Q_2$. 

Then,
\begin{equation*}
Q_2=\frac{-\langle P_0,P_1\rangle(P_0+P_1)+\epsilon _2\sqrt{1-2\langle P_0,P_1\rangle }P_0\tilde \times P_1}{1-\langle P_0,P_1\rangle}\end{equation*}
with~$\epsilon _2\in\{- 1,1\}$.
\end{corollary}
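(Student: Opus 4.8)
The plan is to deduce the claim from Lemma~\ref{GT} and the preceding (isosceles) corollary, at the cost of one short computation using the normalization~\eqref{eq1}. Write $t:=\langle P_0,P_1\rangle$. Since for points of $\H$ the hyperbolic distance is $\arccosh(-\langle\cdot,\cdot\rangle)$, the triangle $P_0P_1Q_2$ being equilateral means precisely that
\[
\langle P_0,P_1\rangle=\langle P_0,Q_2\rangle=\langle P_1,Q_2\rangle=t ,
\]
and, since $P_0\ne P_1$, the equality discussion attached to~\eqref{csinq} gives the strict inequality $t<-1$; consequently $1-t>0$, $1+t<0$ and $1-2t>0$.

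In particular $\langle P_0,Q_2\rangle=\langle P_1,Q_2\rangle$, so the isosceles corollary applies: $Q_2$ has the form~\eqref{QDE} with $a_0=a_1=a$, and~\eqref{eq2} together with $\langle P_0,Q_2\rangle=t$ yields $-a=\dfrac{t}{1-t}$, that is $a=\dfrac{-t}{1-t}$. This already gives the coefficient of $P_0+P_1$ in the claimed formula, so only $b$ is left to determine. Substituting $a_0=a_1=a$ into~\eqref{eq1} collapses it to $-1=-2a^2(1-t)-b^2(1-t^2)$, hence
\[
b^2(1-t^2)=1-2a^2(1-t)=1-\frac{2t^2}{1-t}=\frac{1-t-2t^2}{1-t}=\frac{(1-2t)(1+t)}{1-t},
\]
and, dividing by $1-t^2=(1-t)(1+t)$ (allowed because $1+t\ne0$), $b^2=\dfrac{1-2t}{(1-t)^2}$, so that $b=\dfrac{\epsilon_2\sqrt{1-2t}}{1-t}$ with $\epsilon_2\in\{-1,1\}$; the radicand is positive since $t<-1$. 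Plugging $a$ and $b$ back into $Q_2=a(P_0+P_1)+bP_0\tilde\times P_1$ produces exactly the asserted expression.

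I do not expect any real obstacle: the substance is carried by Lemma~\ref{GT} and the isosceles corollary, and what remains is elementary algebra. The only points worth a moment's attention are that all the denominators ($1-t$, $1-t^2$, $1+t$) are nonzero and that $1-2t>0$, and these are all immediate from $\langle P_0,P_1\rangle<-1$, itself a consequence of~\eqref{csinq} and $P_0\ne P_1$.
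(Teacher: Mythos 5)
Your proposal is correct and follows essentially the same route as the paper: read off $a_0=a_1=a=-\langle P_0,P_1\rangle/(1-\langle P_0,P_1\rangle)$ from the isosceles corollary~\eqref{eq2}, then solve~\eqref{eq1} for $b^2=(1-2\langle P_0,P_1\rangle)/(1-\langle P_0,P_1\rangle)^2$ and substitute into~\eqref{QDE}. The only difference is that you make explicit the sign checks ($\langle P_0,P_1\rangle<-1$, hence $1+\langle P_0,P_1\rangle\ne 0$ and $1-2\langle P_0,P_1\rangle>0$) that the paper leaves implicit, which is a harmless and slightly more careful presentation of the same argument.
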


\begin{proof} We recall that, being~$P_0P_1Q_2$ equilateral, we have that
$$\langle P_0,P_1\rangle =\langle P_0,Q_2\rangle=\langle P_1,Q_2\rangle .$$ 
Therefore, by~\eqref{eq1} and~\eqref{eq2}, used here with~$Q:=Q_2$, we see that
\begin{eqnarray*}&&
b^2(1-\langle P_0,P_1\rangle ^2)
=1-2a^2(1-\langle P_0,P_1\rangle )
\\&&\qquad=1-2\frac{\langle P_0,P_1\rangle ^2}{1-\langle P_0,P_1\rangle}=\frac{(1-2\langle P_0,P_1\rangle )(1+\langle P_0,P_1\rangle )}{1-\langle P_0,P_1\rangle}.
\end{eqnarray*}
As a consequence,
$$
b=\epsilon _2\frac{\sqrt{1-2\langle P_0,P_1\rangle }}{1-\langle P_0,P_1\rangle} $$
and the desired result follows from~\eqref{QDE}.\end{proof}

We now reconsider Corollary~\ref{CORO2} with the aim of identifying the
centroid of an equilateral triangle on the unit hyperboloid
(where, by definition, the centroid of a triangle is the sum of the coordinates of its vertices projected over the hyperboloid):

\begin{lemma}\label{ILLEc}
Let~$P_0P_1Q_2$ be equilateral with~$P_0\ne P_1\ne Q_2$. 
Let~$R_2\in\H$ be its centroid.

Then, 
$$R_2=\frac{\sqrt{1-2\langle P_0,P_1\rangle }(P_0+P_1)+\epsilon _2P_0\tilde \times P_1}{\sqrt{3}(1-\langle P_0,P_1\rangle )}$$
with~$\epsilon _2\in\{- 1,1\}$.
\end{lemma}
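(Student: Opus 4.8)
The plan is to start from the explicit expression for the vertex $Q_2$ provided by Corollary~\ref{CORO2}, form the unnormalized centroid $P_0+P_1+Q_2$, and then normalize it to lie on the hyperboloid. First I would compute
$$
P_0+P_1+Q_2=\left(1-\frac{\langle P_0,P_1\rangle}{1-\langle P_0,P_1\rangle}\right)(P_0+P_1)+\frac{\epsilon_2\sqrt{1-2\langle P_0,P_1\rangle}}{1-\langle P_0,P_1\rangle}\,P_0\tilde\times P_1,
$$
and simplify the coefficient of $(P_0+P_1)$, which is $\dfrac{1-2\langle P_0,P_1\rangle}{1-\langle P_0,P_1\rangle}$. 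Factoring out $\dfrac{\sqrt{1-2\langle P_0,P_1\rangle}}{1-\langle P_0,P_1\rangle}$, the sum becomes proportional to $\sqrt{1-2\langle P_0,P_1\rangle}\,(P_0+P_1)+\epsilon_2\,P_0\tilde\times P_1$, which is exactly the vector appearing in the statement up to the factor $\sqrt{3}$.

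Next I would verify that the vector $V:=\sqrt{1-2\langle P_0,P_1\rangle}\,(P_0+P_1)+\epsilon_2\,P_0\tilde\times P_1$ has Minkowski square equal to $-3(1-\langle P_0,P_1\rangle)^2$, so that dividing by $\sqrt3\,(1-\langle P_0,P_1\rangle)$ yields a vector of Minkowski square $-1$. For this I use the orthogonality relations $\langle P_i,P_0\tilde\times P_1\rangle=0$, the normalizations $\langle P_0,P_0\rangle=\langle P_1,P_1\rangle=-1$, and the identity $\langle P_0\tilde\times P_1,P_0\tilde\times P_1\rangle=\langle P_0,P_1\rangle^2-1$ from~\eqref{eq0}. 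Concretely,
$$
\langle V,V\rangle=(1-2\langle P_0,P_1\rangle)\langle P_0+P_1,P_0+P_1\rangle+\langle P_0\tilde\times P_1,P_0\tilde\times P_1\rangle,
$$
and $\langle P_0+P_1,P_0+P_1\rangle=-2+2\langle P_0,P_1\rangle=-2(1-\langle P_0,P_1\rangle)$, so that $\langle V,V\rangle=-2(1-2\langle P_0,P_1\rangle)(1-\langle P_0,P_1\rangle)+(\langle P_0,P_1\rangle^2-1)$; collecting terms gives $-3(1-\langle P_0,P_1\rangle)^2$, as required. I would also note that $\langle P_0,P_1\rangle<-1<1/2$ by~\eqref{csinq}, so $1-2\langle P_0,P_1\rangle>0$ and the square root is real, and $1-\langle P_0,P_1\rangle>0$, so the sign of the normalization factor is unambiguously positive and $R_2$ lands on the upper sheet.

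The only genuinely non-routine point is checking that the normalized vector lies on the \emph{upper} sheet $\H$ rather than its mirror image, i.e. that $\langle R_2,E_1\rangle\geq1$; this follows because $P_0,P_1\in\H$ forces $\langle P_0+P_1,E_1\rangle\ge2$ while $P_0\tilde\times P_1$ is spacelike hence $\langle P_0\tilde\times P_1,E_1\rangle$ contributes a controlled amount, and more simply from the fact that $R_2$ is by construction the hyperboloid-projection of a point in the convex hull of $P_0,P_1,Q_2\in\H$, which automatically lies above the light cone. Everything else is bookkeeping: substitute the Corollary~\ref{CORO2} formula, add $P_0+P_1$, extract the common scalar, and confirm the Minkowski norm.
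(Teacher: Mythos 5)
Your proposal is correct and follows essentially the same route as the paper: substitute the expression for $Q_2$ from Corollary~\ref{CORO2}, form the sum $P_0+P_1+Q_2$ (the paper's unnormalized $\hat R_2$ is just $(1-\langle P_0,P_1\rangle)$ times this sum), compute its Minkowski norm via~\eqref{eq0} to get $-3(1-2\langle P_0,P_1\rangle)(1-\langle P_0,P_1\rangle)^2$ for $\hat R_2$ (equivalently $-3(1-\langle P_0,P_1\rangle)^2$ for your $V$), and normalize. The sign and upper-sheet checks you add are sound but routine, and the paper omits them.
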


\begin{proof}
The centroid~$R_2$ of the equilateral hyperbolic triangle~$P_0P_1Q_2$ is~$\hat R_2/\sqrt {-\langle \hat R_2,\hat R_2\rangle}$, where  
$$\hat R_2:=(1-2\langle P_0,P_1\rangle )(P_0+P_1)+\epsilon _2\sqrt{1-2\langle P_0,P_1\rangle }P_0\tilde \times P_1 $$
and, thanks to the equality in~\eqref{eq0},
\begin{eqnarray*}&&
-\langle \hat R_2,\hat R_2\rangle \\
&=&2(1-2\langle P_0, P_1\rangle)^2 - 2(1-2\langle P_0, P_1\rangle)^2\langle P_0, P_1\rangle+(1-2\langle P_0, P_1\rangle)(1-\langle P_0, P_1\rangle^2)\\
&=&(1-2\langle P_0, P_1\rangle) \big(2(1-2\langle P_0, P_1\rangle)-2 (1-2\langle P_0, P_1\rangle)\langle P_0, P_1\rangle +1-\langle P_0, P_1\rangle^2\big)
\\&=&3(1-2\langle P_0,P_1\rangle )(1-\langle P_0,P_1\rangle )^2.
\end{eqnarray*}
The desired result now plainly follows.
\end{proof}

\section{A Bespoke Set of Hyperbolic Coordinates}\label{u4i3tifgesec:bespoke}

{F}rom now on we consider three distinct points~$P_0$, $P_1$, $P_2\in\H$ and define
\begin{equation}\label{firstalp}\alpha :=-1+ \langle P_0,P_1\rangle + \langle P_1,P_2\rangle +\langle P_2,P_0\rangle \end{equation} and
\begin{equation}\label{firstalpBIS}
\chi  :=\langle P_0\tilde \times P_1, P_2\rangle .\end{equation}
We stress that~$\alpha$ is symmetric with 
respect to permutations of~$P_0$, $P_1$ and~$P_2$, and that~$\chi$  is symmetric with respect to cyclic permutations. 

Since Theorems~\ref{non-exis} and~\ref{REPENE} are invariant under permutations of~$P_0$, $P_1$ and~$P_2$, we can
list the vertices~$P_0$, $P_1$ and~$P_2$ so that
\begin{equation}\label{chimag}
\chi\ge0.
\end{equation}
This reordering of vertices will be implicitly assumed in what follows.

{F}rom now on, we will also consider~$Q_0$, $Q_2\in\H$ such that~$P_0P_1Q_2$ and~$P_1P_2Q_0$
are equilateral. Let also~$R_0$ be the centroid of~$P_1P_2Q_0$
and~$R_2$ be the centroid of~$P_0P_1Q_2$.

With this notation, we can take a step further from Lemma~\ref{ILLEc} and obtain that:

\begin{lemma}
We have that
\begin{equation*}\begin{split}&3(1-\langle P_0,P_1\rangle )(1-\langle P_1,P_2\rangle )\langle R_2,R_0\rangle \\&\qquad=\alpha
\sqrt{1-2\langle P_0,P_1\rangle }\sqrt{1-2\langle P_1,P_2\rangle } +\chi
\Big(\epsilon _0 \sqrt{1-2\langle P_0,P_1\rangle } +
\epsilon _2 \sqrt{1-2\langle P_1,P_2\rangle }\Big)  \\&\qquad\qquad -\epsilon _2\epsilon _0(\langle P_0,P_1\rangle \langle P_1,P_2\rangle +\langle P_0,P_2\rangle),\end{split}\end{equation*}with~$\epsilon_0$, $\epsilon _2\in\{- 1,1\}$.
\end{lemma}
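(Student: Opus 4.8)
The plan is to compute $\langle R_2, R_0\rangle$ directly from the closed-form expressions for the two centroids provided by Lemma~\ref{ILLEc}, and then expand the resulting Minkowski inner product term by term using the algebraic identities established in Section~\ref{MINK}. Concretely, from Lemma~\ref{ILLEc} applied to the equilateral triangle $P_0P_1Q_2$ we have
\begin{equation*}
R_2=\frac{\sqrt{1-2\langle P_0,P_1\rangle }\,(P_0+P_1)+\epsilon _2\,P_0\tilde \times P_1}{\sqrt{3}\,(1-\langle P_0,P_1\rangle )},
\end{equation*}
and applying the same lemma to $P_1P_2Q_0$ (with the roles of the defining pair being $P_1$ and $P_2$) gives
\begin{equation*}
R_0=\frac{\sqrt{1-2\langle P_1,P_2\rangle }\,(P_1+P_2)+\epsilon _0\,P_1\tilde \times P_2}{\sqrt{3}\,(1-\langle P_1,P_2\rangle )}.
\end{equation*}
Multiplying these and clearing the common factor $\tfrac{1}{3(1-\langle P_0,P_1\rangle)(1-\langle P_1,P_2\rangle)}$ reduces the statement to showing that the numerator bilinear form equals the claimed right-hand side.

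The expansion splits into four groups of terms, according to which of $\{P_0+P_1,\ \epsilon_2 P_0\tilde\times P_1\}$ is paired with which of $\{P_1+P_2,\ \epsilon_0 P_1\tilde\times P_2\}$. For the first group, $\langle P_0+P_1, P_1+P_2\rangle$ expands into four Minkowski inner products; using $\langle P_i,P_i\rangle=-1$ and the definition~\eqref{firstalp} of $\alpha$, one checks that $\langle P_0+P_1,P_1+P_2\rangle = -1+\langle P_0,P_1\rangle+\langle P_1,P_2\rangle+\langle P_2,P_0\rangle = \alpha$; this term carries the coefficient $\sqrt{1-2\langle P_0,P_1\rangle}\sqrt{1-2\langle P_1,P_2\rangle}$, matching the first summand. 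For the two ``mixed'' groups — $\langle P_0+P_1,\ \epsilon_0 P_1\tilde\times P_2\rangle$ and $\langle \epsilon_2 P_0\tilde\times P_1,\ P_1+P_2\rangle$ — I would use that $\langle P_1, P_1\tilde\times P_2\rangle=0$ together with the fact that the hyperbolic scalar triple product agrees with the Euclidean one and hence is cyclic and alternating, so that $\langle P_0, P_1\tilde\times P_2\rangle = \langle P_0\tilde\times P_1, P_2\rangle = \chi$ and likewise $\langle P_0\tilde\times P_1, P_2\rangle=\chi$; these produce the terms $\chi(\epsilon_0\sqrt{1-2\langle P_1,P_2\rangle} + \epsilon_2\sqrt{1-2\langle P_0,P_1\rangle})$, matching the middle summand after the symmetric relabeling. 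For the last group, $\langle \epsilon_2 P_0\tilde\times P_1,\ \epsilon_0 P_1\tilde\times P_2\rangle$, I would invoke identity~\eqref{eq0} with $v=P_0$, $w=P_1$, $v'=P_1$, $w'=P_2$, giving $\langle P_0\tilde\times P_1, P_1\tilde\times P_2\rangle = -\langle P_0,P_1\rangle\langle P_1,P_2\rangle + \langle P_0,P_2\rangle\langle P_1,P_1\rangle = -\langle P_0,P_1\rangle\langle P_1,P_2\rangle - \langle P_0,P_2\rangle$, so the term contributes $-\epsilon_2\epsilon_0(\langle P_0,P_1\rangle\langle P_1,P_2\rangle + \langle P_0,P_2\rangle)$, the final summand.

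Assembling the four groups and dividing by $3(1-\langle P_0,P_1\rangle)(1-\langle P_1,P_2\rangle)$ yields the asserted formula. The main point requiring care is bookkeeping: correctly tracking the signs coming from the Minkowski signature (all the $\langle P_i,P_i\rangle=-1$ contributions) and making sure the cyclic/alternating symmetries of the triple product are applied to the correct index orderings so that every stray triple product is rewritten as $\chi$ rather than $-\chi$. There is no genuine analytic obstacle here — the result is a purely algebraic consequence of Lemma~\ref{ILLEc} and identity~\eqref{eq0} — but the sign discipline in the mixed and triple-product terms is where an error would most naturally creep in, so I would double-check those against the special case $P_2=P_0$ or against an explicit numerical example on the hyperboloid.
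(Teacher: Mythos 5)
Your proposal is correct and follows essentially the same route as the paper: expand $\langle R_2,R_0\rangle$ from the two centroid formulas of Lemma~\ref{ILLEc} into four groups, identify $\alpha$ in the $(P_0+P_1,P_1+P_2)$ term, use the cyclic symmetry of the scalar triple product for the mixed terms, and apply~\eqref{eq0} to the cross-product term. The only slip is the pairing of the epsilons with the square roots in the mixed terms (it should be $\epsilon_0\sqrt{1-2\langle P_0,P_1\rangle}+\epsilon_2\sqrt{1-2\langle P_1,P_2\rangle}$, since $\epsilon_0$ multiplies $P_1\tilde\times P_2$ and therefore meets the $(P_0+P_1)$ part of $R_2$), which is exactly the kind of bookkeeping detail you flagged.
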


\begin{proof} By swapping indexes in Lemma~\ref{ILLEc}, we see that
the equilateral hyperbolic triangle~$P_1P_2Q_0$ has centroid 
$$R_0=\frac{\sqrt{1-2\langle P_1,P_2\rangle }(P_1+P_2)+\epsilon _0P_1\tilde \times P_2}{\sqrt{3}(1-\langle P_1,P_2\rangle )}.$$
Therefore, exploiting~\eqref{eq0}
and the cyclic symmetry of the hyperbolic scalar triple product,
\begin{eqnarray*}&&3(1-\langle P_0,P_1\rangle )(1-\langle P_1,P_2\rangle )\langle R_2,R_0\rangle \\&&\qquad=
\langle \sqrt{1-2\langle P_0,P_1\rangle }(P_0+P_1)+\epsilon _2P_0\tilde \times P_1, \sqrt{1-2\langle P_1,P_2\rangle }(P_1+P_2)+\epsilon _0P_1\tilde \times P_2\rangle \\&&\qquad=
\sqrt{1-2\langle P_0,P_1\rangle }\sqrt{1-2\langle P_1,P_2\rangle }\langle P_0+P_1, P_1+P_2\rangle \\&&\qquad\qquad+
\epsilon _0 \sqrt{1-2\langle P_0,P_1\rangle }\langle P_0+P_1, P_1\tilde \times P_2\rangle +
\epsilon _2 \sqrt{1-2\langle P_1,P_2\rangle }\langle P_0\tilde \times P_1, P_1+P_2\rangle \\&&\qquad\qquad+
\epsilon _2\epsilon _0\langle P_0\tilde \times P_1, P_1\tilde \times P_2\rangle \\&&\qquad=
\sqrt{1-2\langle P_0,P_1\rangle }\sqrt{1-2\langle P_1,P_2\rangle }(-1+\langle P_0,P_1\rangle +\langle P_1,P_2\rangle +\langle P_2,P_0\rangle ) \\
&&\qquad\qquad+
(\epsilon _0 \sqrt{1-2\langle P_0,P_1\rangle } +
\epsilon _2 \sqrt{1-2\langle P_1,P_2\rangle })\langle P_0\tilde \times P_1, P_2\rangle 
\\&&\qquad\qquad
 -\epsilon _2\epsilon _0(\langle P_0,P_1\rangle \langle P_1,P_2\rangle +\langle P_0,P_2\rangle).\end{eqnarray*}
{F}rom this the desired result follows.
\end{proof}

We now introduce a new set of ``hyperbolic units of measurements'', technically and conceptually different from the standard hyperbolic distance~$\arccosh(-\langle\cdot,\cdot\rangle)$,
which come in handy to simplify several otherwise cumbersome calculations.
Namely, we define  
\begin{equation}\label{cooR}\begin{split}&d_0:=\sqrt{1-2\langle P_1,P_2\rangle},
\\&d_1:=\sqrt{1-2\langle P_2,P_0\rangle}\\
{\mbox{and }}\quad& d_2:=\sqrt{1-2\langle P_0,P_1\rangle} .\end{split}\end{equation}
We point out that $(d_0,d_1,d_2)$ defines the congruency class of the hyperbolic triangle, and that these hyperbolic coordinates are well defined, thanks
to~\eqref{csinq}.

The main features of these hyperbolic coordinates are the following:

\begin{lemma}
For all~$i\in \Z/3\Z$, 
we have that 
\begin{equation}\label{triineq.2}d_i\geq \sqrt{3}\end{equation}
and
\begin{equation}\label{triineq}
d_i^2-1\leq (d_{i+1}^2-1)(d_{i+2}^2-1).
\end{equation}
\end{lemma}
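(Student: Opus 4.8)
The plan is to translate the two claimed inequalities into statements purely about the Minkowski inner products $\langle P_i,P_j\rangle$ and then recognize them as the Cauchy--Schwarz bound~\eqref{csinq} applied to auxiliary points of~$\H$. Writing $p_0:=-\langle P_1,P_2\rangle$, $p_1:=-\langle P_2,P_0\rangle$, $p_2:=-\langle P_0,P_1\rangle$, the bound~\eqref{csinq} says exactly that $p_i\ge 1$ for every $i$, and by~\eqref{cooR} we have $d_i^2=1+2p_i$, hence $d_i^2-1=2p_i\ge 2>0$. For~\eqref{triineq.2}, since $p_i\ge1$ we get $d_i^2=1+2p_i\ge 3$, i.e.\ $d_i\ge\sqrt3$, which disposes of the first inequality immediately.

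For~\eqref{triineq}, substituting $d_i^2-1=2p_i$ reduces the claim to $2p_i\le 4 p_{i+1}p_{i+2}$, i.e.\ $p_i\le 2p_{i+1}p_{i+2}$. I would prove this by exhibiting a point of~$\H$ whose inner product with $P_i$ (or with some combination of $P_{i+1},P_{i+2}$) is controlled by $p_{i+1}p_{i+2}$, and then invoke~\eqref{csinq}. Concretely, fix indices so that we must show $p_0\le 2p_1p_2$, i.e.\ $-\langle P_1,P_2\rangle\le 2\langle P_2,P_0\rangle\langle P_0,P_1\rangle$. The natural device is to reflect one vertex through another: one checks that $P_1':=2\langle P_0,P_1\rangle P_0-P_1$ lies on~$\H$ (indeed $\langle P_1',P_1'\rangle=4\langle P_0,P_1\rangle^2\langle P_0,P_0\rangle-4\langle P_0,P_1\rangle^2+\langle P_1,P_1\rangle=-4\langle P_0,P_1\rangle^2+4\langle P_0,P_1\rangle^2-1=-1$, and the branch condition $\langle P_1',E_1\rangle\ge1$ holds because $-\langle P_0,P_1\rangle\ge1$ forces the timelike component to have the correct sign). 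Then~\eqref{csinq} applied to the pair $P_1',P_2\in\H$ gives $\langle P_1',P_2\rangle\le-1$, that is $2\langle P_0,P_1\rangle\langle P_0,P_2\rangle-\langle P_1,P_2\rangle\le-1$, whence $-\langle P_1,P_2\rangle\le-1-2\langle P_0,P_1\rangle\langle P_0,P_2\rangle=-1+2p_1p_2$. Since $p_1p_2\ge1$, we have $2p_1p_2-1\le 2p_1p_2$, and therefore $p_0=-\langle P_1,P_2\rangle\le 2p_1p_2-1\le 2p_1p_2$, which is the required bound; unwinding, $d_0^2-1=2p_0\le 4p_1p_2=(d_1^2-1)(d_2^2-1)$. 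Cyclically permuting $P_0,P_1,P_2$ gives~\eqref{triineq} for all $i\in\Z/3\Z$.

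The only genuine subtlety is checking that the reflected point $P_1'$ actually lands on the correct sheet of the hyperboloid — i.e.\ that $\langle P_1',E_1\rangle\ge1$ rather than $\le-1$ — since otherwise~\eqref{csinq} would not apply to $P_1'$ and $P_2$. This follows from the fact that the timelike coordinate of $P_1'$ is $-2\langle P_0,P_1\rangle\cdot(\text{timelike coord.\ of }P_0)-(\text{timelike coord.\ of }P_1)$, and both $P_0$ and $P_1$ have timelike coordinate $\ge1$ while $-2\langle P_0,P_1\rangle\ge2$, so the combination is $\ge2\cdot1-(\text{something}\ge1)$, which one must bound using $-\langle P_0,P_1\rangle\ge$ the timelike coordinate of $P_1$ divided by that of $P_0$ (a consequence of the elementary estimate in the footnote to~\eqref{csinq}). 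Alternatively, one can bypass this reflection argument entirely and prove $p_0\le2p_1p_2$ directly by the footnote-style computation: write $P_j=(q_j,\tilde P_j)$ with $q_j=\langle P_j,E_1\rangle\ge1$, use $q_j^2=1+|\tilde P_j|^2$ and $p_{ij}=q_iq_j-\tilde P_i\cdot\tilde P_j$, and reduce everything to Euclidean Cauchy--Schwarz on the vectors $\tilde P_0,\tilde P_1,\tilde P_2\in\R^2$; I would present whichever of the two is shorter, but I expect the sheet-checking step to be the one place where a careless argument could slip.
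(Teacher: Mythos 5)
Your proof of \eqref{triineq.2} is the same as the paper's: both read it off directly from \eqref{csinq}. For \eqref{triineq} you take a genuinely different route. The paper invokes the hyperbolic triangle inequality $\arccosh\frac{d_i^2-1}{2}\le\arccosh\frac{d_{i+1}^2-1}{2}+\arccosh\frac{d_{i+2}^2-1}{2}$ (citing Iversen), expands via $\cosh(\arccosh x+\arccosh y)=xy+\sqrt{(x^2-1)(y^2-1)}$, and bounds the square root by $xy$; you instead reflect one vertex through another and apply \eqref{csinq} to the reflected point. Your route is self-contained (it effectively reproves the needed consequence of the triangle inequality from \eqref{csinq} alone) and, once repaired, even gives the slightly sharper bound $d_i^2-1\le(d_{i+1}^2-1)(d_{i+2}^2-1)-2$. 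So the idea is good; the execution, however, contains concrete errors.

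First, the reflection formula has the wrong sign, and this is not cosmetic. With $c:=\langle P_0,P_1\rangle\le-1$, your point $P_1':=2cP_0-P_1$ satisfies $\langle P_1',P_1'\rangle=4c^2\langle P_0,P_0\rangle-4c\langle P_0,P_1\rangle+\langle P_1,P_1\rangle=-4c^2-4c^2-1=-8c^2-1\ne-1$; your displayed verification silently turns the cross term $-4c^2$ into $+4c^2$. The geodesic reflection of $P_1$ through $P_0$ is $P_1':=-2\langle P_0,P_1\rangle P_0-P_1$ (note the coefficient of $P_0$ is then $2p_2\ge2>0$), for which $\langle P_1',P_1'\rangle=-1$ and $\langle P_1',P_2\rangle=-2p_1p_2+p_0$, so \eqref{csinq} gives exactly your target $p_0\le2p_1p_2-1$. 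Your final line comes out right only because a second sign slip (you write $-1-2\langle P_0,P_1\rangle\langle P_0,P_2\rangle=-1+2p_1p_2$, whereas $-2\langle P_0,P_1\rangle\langle P_0,P_2\rangle=-2p_1p_2$) compensates the first. Second, your proposed sheet check, namely $-\langle P_0,P_1\rangle\ge\langle P_1,E_1\rangle/\langle P_0,E_1\rangle$, is false in general: taking $P_0=(\cosh 1,\sinh 1,0)^{\bf T}$ and $P_1=(\cosh 2,\sinh 2,0)^{\bf T}$ gives $-\langle P_0,P_1\rangle=\cosh 1\approx1.54$ while the ratio of timelike coordinates is $\cosh 2/\cosh 1\approx2.44$. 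The clean way to check the sheet is to note $\langle P_1',P_0\rangle=-2\langle P_0,P_1\rangle\langle P_0,P_0\rangle-\langle P_1,P_0\rangle=\langle P_0,P_1\rangle\le-1<0$, and two unit timelike vectors have negative Minkowski product precisely when they lie on the same sheet; hence $P_1'\in\H$ and \eqref{csinq} applies to the pair $P_1',P_2$.
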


\begin{proof} The claim in~\eqref{triineq.2} follows directly from~\eqref{csinq}.

Also, by the triangle inequality in~$\H$ (see e.g. page~70 in~\cite{MR1205776}), for~$i\in \Z/3\Z$, 
$$\arccosh \left(\frac{d_i^2-1}{2}\right)\leq \arccosh \left(\frac{d_{i+1}^2-1}{2}\right)+\arccosh \left(\frac{d_{i+2}^2-1}{2}\right).$$

We recall that
$$ \cosh \big( \arccosh x+\arccosh y\big) =xy+\sqrt{(x^2-1)(y^2-1)}.$$
Therefore, using this formula with~$x:=\frac{d_{i+1}^2-1}{2}$
and~$y:=\frac{d_{i+2}^2-1}{2}$, we find that
\begin{eqnarray*}
2d_i^2-2&\leq& (d_{i+1}^2-1)(d_{i+2}^2-1)+\sqrt{ (d_{i+1}^4-2d_{i+1}^2-3)(d_{i+2}^4-2d_{i+2}^2-3)}\\&\le&2 (d_{i+1}^2-1)(d_{i+2}^2-1),
\end{eqnarray*}
from which one obtains~\eqref{triineq}.
\end{proof}

Now we calculate~$\alpha$ and~$\chi$:

\begin{proposition}
We have that 
\begin{equation}
\label{firstalphaeq} 2\alpha =1-d_0^2-d_1^2-d_2^2\end{equation}
and
\begin{equation}
\label{chieq}2\chi =\sqrt{3(d_0^2+d_1^2+d_2^2)-(d_0^2d_1^2+d_1^2d_2^2+d_2^2d_0^2+d_0^4+d_1^4+d_2^4)+d_0^2d_1^2d_2^2}.
\end{equation}\end{proposition}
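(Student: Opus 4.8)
The plan is to reduce both identities to substituting the definitions in~\eqref{cooR}, which read~$\langle P_1,P_2\rangle=\frac{1-d_0^2}{2}$, $\langle P_2,P_0\rangle=\frac{1-d_1^2}{2}$ and~$\langle P_0,P_1\rangle=\frac{1-d_2^2}{2}$, into the definitions~\eqref{firstalp} of~$\alpha$ and~\eqref{firstalpBIS} of~$\chi$. For~\eqref{firstalphaeq} this is immediate: inserting the three identities above into~\eqref{firstalp} gives~$2\alpha=-2+(1-d_0^2)+(1-d_1^2)+(1-d_2^2)=1-d_0^2-d_1^2-d_2^2$. So the only genuine work is in establishing~\eqref{chieq}.

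For that, I would first rewrite~$\chi^2$ as minus a Gram determinant. Since~$v\tilde \times w=J(v\times w)$ and~$J^2$ is the identity matrix, the hyperbolic triple product coincides with the Euclidean one, $\langle u,v\tilde \times w\rangle=u\cdot(v\times w)$; hence, denoting by~$M$ the~$3\times 3$ matrix whose columns are~$P_0$, $P_1$, $P_2$, the cyclic symmetry of the triple product yields~$\chi=\det M$. Therefore
$$\chi^2=\det\big(M^{\bf T}M\big)=\det\big(M^{\bf T}J\,JM\big)=(\det J)\,\det\big(M^{\bf T}JM\big)=-\det\big(M^{\bf T}JM\big),$$
and since the~$(i,j)$ entry of~$M^{\bf T}JM$ is exactly~$\langle P_i,P_j\rangle$, we conclude that~$\chi^2$ equals minus the determinant of the Minkowski Gram matrix of~$P_0$, $P_1$, $P_2$. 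Expanding this~$3\times 3$ determinant and using~$\langle P_i,P_i\rangle=-1$ then gives
$$\chi^2=1-\langle P_1,P_2\rangle^2-\langle P_2,P_0\rangle^2-\langle P_0,P_1\rangle^2-2\,\langle P_1,P_2\rangle\,\langle P_2,P_0\rangle\,\langle P_0,P_1\rangle.$$
(Alternatively, one can bypass this identity by using Lemma~\ref{GT} to write~$P_2=a_0P_0+a_1P_1+bP_0\tilde \times P_1$, observing via~\eqref{eq0} that~$\chi=b\,\langle P_0\tilde \times P_1,P_0\tilde \times P_1\rangle=b\big(\langle P_0,P_1\rangle^2-1\big)$, and then extracting~$b^2$ from~\eqref{eq1} and~\eqref{eq20}; this route is equivalent but computationally heavier.)

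The final step is the substitution of~\eqref{cooR} into the last display and the collection of terms, which is mechanical and produces
$$4\chi^2=3(d_0^2+d_1^2+d_2^2)-\big(d_0^2d_1^2+d_1^2d_2^2+d_2^2d_0^2+d_0^4+d_1^4+d_2^4\big)+d_0^2d_1^2d_2^2.$$
Taking square roots yields~\eqref{chieq}: the positive root is the correct one because of the normalization~\eqref{chimag}, and, as a byproduct, the non-negativity of the radicand in~\eqref{chieq} — which is not apparent from its algebraic form — is automatic, being equal to~$4\chi^2$. The only obstacle here is bookkeeping: pinning down the sign in~$\chi^2=-\det(\text{Gram})$ and then tracking signs carefully through the determinant expansion and the polynomial substitution; beyond that there is no conceptual difficulty.
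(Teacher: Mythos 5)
Your proposal is correct, and for the $\chi$ identity it takes a genuinely different route from the paper. The paper derives
$$\chi^2=1-\langle P_0,P_1\rangle^2-\langle P_1,P_2\rangle^2-\langle P_2,P_0\rangle^2-2\langle P_0,P_1\rangle\langle P_1,P_2\rangle\langle P_2,P_0\rangle$$
by the route you only mention parenthetically: it invokes Lemma~\ref{GT} to write $P_2=a_0P_0+a_1P_1+bP_0\tilde\times P_1$, observes via~\eqref{eq0} that $\chi=-b(1-\langle P_0,P_1\rangle^2)$, and then extracts $b^2(1-\langle P_0,P_1\rangle^2)^3$ from~\eqref{eq1} and~\eqref{eq20} through a fairly long polynomial computation. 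Your Gram-determinant argument reaches the same intermediate identity more conceptually and with less bookkeeping: the chain $\chi=\det M$, $\chi^2=\det(M^{\bf T}M)=(\det J)\det(M^{\bf T}JM)=-\det(M^{\bf T}JM)$ is clean, the identification of the entries of $M^{\bf T}JM$ with $\langle P_i,P_j\rangle$ is immediate, and the sign of the determinant is pinned down by $\det J=-1$ rather than by tracking $b^2$. What the paper's route buys is self-containment within machinery it has already built (Lemma~\ref{GT} is reused elsewhere), whereas yours buys brevity and makes the non-negativity of the radicand in~\eqref{chieq} transparent, as you note. The treatment of~\eqref{firstalphaeq}, the final substitution of~\eqref{cooR}, and the selection of the positive square root via the normalization~\eqref{chimag} are identical in both arguments.
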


\begin{proof} The claim in~\eqref{firstalphaeq} follows from~\eqref{firstalp} and~\eqref{cooR}.


To calculate~$\chi$ note first that, by Lemma~\ref{GT},  
$$P_2= -\frac{\langle P_2,P_0\rangle +\langle P_0,P_1\rangle \langle P_1,P_2\rangle }{1-\langle P_0,P_1\rangle ^2}P_0-\frac{\langle P_1,P_2\rangle+\langle P_0,P_1\rangle \langle P_2,P_0\rangle}{1-\langle P_0,P_1\rangle ^2}P_1+bP_0\tilde \times P_1.$$
Hence, using also~\eqref{eq0},
$$\chi = \langle P_0\tilde \times P_1,P_2\rangle =b\langle P_0\tilde \times P_1,P_0\tilde \times P_1\rangle =-b(1-\langle P_0,P_1\rangle ^2).$$

Moreover, by~\eqref{eq1}, \begin{eqnarray*}
&&b^2(1-\langle P_0,P_1\rangle ^2)^3\\
&=&(1-\langle P_0,P_1\rangle ^2)^2(1-a_0^2-a_1^2+2a_0a_1\langle P_0,P_1\rangle )\\&=& (1-\langle P_0,P_1\rangle ^2)^2-(\langle P_2,P_0\rangle +\langle P_0,P_1\rangle \langle P_1,P_2\rangle )^2-(\langle P_1,P_2\rangle+\langle P_0,P_1\rangle \langle P_2,P_0\rangle)^2\\&& \qquad+
2(\langle P_2,P_0\rangle +\langle P_0,P_1\rangle \langle P_1,P_2\rangle )(\langle P_1,P_2\rangle+\langle P_0,P_1\rangle \langle P_2,P_0\rangle)\langle P_0,P_1\rangle \\
&=&(1-\langle P_0,P_1\rangle ^2)^2
+2\langle P_0,P_1\rangle^3 \langle P_0,P_2\rangle \langle P_1,P_2\rangle   -2\langle P_0,P_1\rangle \langle P_0,P_2\rangle \langle P_1,P_2\rangle\\&&\qquad
+\langle P_0,P_1\rangle^2 \langle P_0,P_2\rangle^2 +\langle P_0,P_1\rangle^2 \langle P_1,P_2\rangle^2
-  \langle P_0,P_2\rangle^2 -\langle P_1,P_2\rangle^2
\\&=&
(1 -\langle P_0,P_1\rangle ^2)(1 - \langle P_0,P_1\rangle ^2 - \langle P_1,P_2\rangle ^2 - \langle P_2,P_0\rangle ^2 - 2\langle P_0,P_1\rangle 
\langle P_1,P_2\rangle \langle P_2,P_0\rangle  ).\end{eqnarray*}

Thus, we find that
\begin{eqnarray*}\chi & =&-b(1-\langle P_0,P_1\rangle ^2)\\&=&\pm \sqrt{1-\langle P_0,P_1\rangle ^2-\langle P_1,P_2\rangle ^2-\langle P_2,P_0\rangle ^2-2 \langle P_0,P_1\rangle \langle P_1,P_2\rangle \langle P_2,P_0\rangle}\end{eqnarray*}
and then~\eqref{chieq} follows from~\eqref{chimag}.\end{proof}

Now we define
\begin{equation}\label{DEGAm}
\gamma := 3(d_0^2+1)(d_1^2+1)(d_2^2+1)\end{equation}
and we have the following two estimates:

\begin{lemma} It holds that
\begin{eqnarray}\label{chiest}&&
(2\chi )^2\leq\frac{1}{3}\sum_{i=0}^2d_i^2(d_{i+1}^2-3)(d_{i+2}^2-3)\\ {\mbox{and }}&&
\label{alphachigam}
-24\alpha \chi \leq \gamma .
\end{eqnarray}\end{lemma}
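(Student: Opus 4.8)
The plan is to establish the two inequalities \eqref{chiest} and \eqref{alphachigam} separately, since each only involves the symmetric functions of~$d_0^2,d_1^2,d_2^2$ already computed in~\eqref{firstalphaeq} and~\eqref{chieq}. For~\eqref{chiest} I would start from the explicit expression for~$(2\chi)^2$ in~\eqref{chieq}, namely
\[
(2\chi)^2 = 3(d_0^2+d_1^2+d_2^2)-\big(d_0^2d_1^2+d_1^2d_2^2+d_2^2d_0^2+d_0^4+d_1^4+d_2^4\big)+d_0^2d_1^2d_2^2,
\]
and compare it directly with the right-hand side. Expanding
\[
\tfrac13\sum_{i=0}^2 d_i^2(d_{i+1}^2-3)(d_{i+2}^2-3)
= \tfrac13\sum_i d_i^2 d_{i+1}^2 d_{i+2}^2 - \sum_i d_i^2(d_{i+1}^2+d_{i+2}^2) + 3\sum_i d_i^2
= d_0^2d_1^2d_2^2 - 2\sum_{i<j}d_i^2d_j^2 + 3\sum_i d_i^2,
\]
the difference (right minus left) is
\[
\tfrac13\sum_i d_i^2(d_{i+1}^2-3)(d_{i+2}^2-3) - (2\chi)^2
= \sum_i d_i^4 - \sum_{i<j} d_i^2 d_j^2 \;=\; \tfrac12\sum_{i}\big(d_i^2-d_{i+1}^2\big)^2 \;\ge\; 0,
\]
which is exactly~\eqref{chiest}. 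The only subtlety is bookkeeping of the cyclic sums, so I would present this as a short direct computation.

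For~\eqref{alphachigam} I would use~\eqref{firstalphaeq} to write~$-2\alpha = d_0^2+d_1^2+d_2^2-1$, which is positive by~\eqref{triineq.2}, so~$-24\alpha\chi = 12(d_0^2+d_1^2+d_2^2-1)\cdot 2\chi\ge 0$, and~$\gamma\ge 0$ as well, so both sides are nonnegative and I may compare squares. Thus it suffices to show
\[
\big(12(d_0^2+d_1^2+d_2^2-1)\big)^2 (2\chi)^2 \leq \gamma^2 = 9(d_0^2+1)^2(d_1^2+1)^2(d_2^2+1)^2.
\]
Here I would substitute the formula for~$(2\chi)^2$ from~\eqref{chieq} and reduce the inequality to a polynomial inequality in the three variables~$x_i := d_i^2-3 \ge 0$ (shifting by the bound~\eqref{triineq.2}), together with the constraint~\eqref{triineq}, i.e.\ $x_i(x_i+2)\le (x_{i+1}+2)(x_{i+2}+2)$ after the substitution~$d_i^2-1 = x_i+2$. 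The natural strategy is a Schur-like / SOS decomposition: write the difference~$\gamma^2 - 144(\sum d_i^2-1)^2(2\chi)^2$ as a sum of manifestly nonnegative terms, using the triangle constraints~\eqref{triineq} where pure positivity of the~$x_i$ is not enough.

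The main obstacle I expect is precisely the last step: $\gamma^2 - 144(\sum d_i^2-1)^2(2\chi)^2$ is a symmetric degree-twelve polynomial in~$(d_0,d_1,d_2)$ (degree six in the~$d_i^2$), and it is presumably \emph{not} nonnegative on all of~$\{d_i^2\ge 3\}$ — the triangle inequalities~\eqref{triineq} are genuinely needed, so a naive SOS in the~$x_i$ will fail and one must feed in the constraint. I would handle this by introducing the slack variables~$s_i := (x_{i+1}+2)(x_{i+2}+2) - x_i(x_i+2)\ge 0$ and expressing the target polynomial as a nonnegative combination~$\sum_i A_i s_i + B$ with~$B$ a sum of squares in the~$x_i$ and the~$A_i$ nonnegative polynomials; alternatively, after exploiting symmetry one can assume~$d_0\le d_1\le d_2$ (consistent with the cyclic labelling needed for~\eqref{chimag}) and argue monotonically. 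Checking equality cases — the equilateral triangle~$d_0=d_1=d_2$, where~$2\chi$ attains its value and one should verify the inequality is tight or strict — will also require care and will confirm the algebra. Everything else is routine expansion of the symmetric functions already recorded in the Proposition.
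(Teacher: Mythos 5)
Your treatment of \eqref{chiest} is correct and is essentially the paper's argument: the paper applies Cauchy--Schwarz to $(d_0^2,d_1^2,d_2^2)$ and $(d_1^2,d_2^2,d_0^2)$ to get $d_0^4+d_1^4+d_2^4\ge d_0^2d_1^2+d_1^2d_2^2+d_2^2d_0^2$, which is exactly your sum-of-squares identity $\sum_i d_i^4-\sum_{i<j}d_i^2d_j^2=\tfrac12\sum_i(d_i^2-d_{i+1}^2)^2\ge0$; the cyclic bookkeeping you flag checks out.

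For \eqref{alphachigam}, however, there is a genuine gap: you reduce to the squared inequality $144(d_0^2+d_1^2+d_2^2-1)^2(2\chi)^2\le\gamma^2$ and then only outline a strategy (SOS with slack variables from \eqref{triineq}, or a monotonicity argument after ordering), explicitly anticipating that the obstacle is serious. You never carry this out, so the second inequality is not proved. The missing observation is that no such machinery is needed. Since $-2\alpha=d_0^2+d_1^2+d_2^2-1>0$ and $\chi\ge0$, write $-24\alpha\chi=6(-2\alpha)(2\chi)\le 3\big((2\alpha)^2+(2\chi)^2\big)$ by $2ab\le a^2+b^2$; then substituting \eqref{firstalphaeq} and \eqref{chieq} gives the exact identity
\begin{equation*}
3\big((2\alpha)^2+(2\chi)^2\big)=3\big(d_0^2d_1^2d_2^2+d_0^2d_1^2+d_1^2d_2^2+d_2^2d_0^2+d_0^2+d_1^2+d_2^2+1\big)=3(d_0^2+1)(d_1^2+1)(d_2^2+1)=\gamma,
\end{equation*}
so \eqref{alphachigam} is immediate, with no use of \eqref{triineq.2}, \eqref{triineq}, or any constrained positivity argument. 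Your squared inequality is in fact a consequence of this (so your route is not doomed), but as written it replaces a one-line AM--GM step with an unexecuted degree-twelve polynomial analysis whose difficulty you yourself identify as the main obstacle; that step must either be completed or replaced by the identity above.
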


\begin{proof} By~\eqref{chieq} and the standard Cauchy-Schwarz inequality
applied to the $3$-dimensional vectors~$(d_0^2,d_1^2,d_2^2)$
and~$(d_1^2,d_2^2,d_0^2)$, we see that
\begin{eqnarray*}
(2\chi )^2&=&3(d_0^2+d_1^2+d_2^2)-(d_0^2d_1^2+d_1^2d_2^2+d_2^2d_0^2+d_0^4+d_1^4+d_2^4)+d_0^2d_1^2d_2^2\\& \leq &
3(d_0^2+d_1^2+d_2^2)-2(d_0^2d_1^2+d_1^2d_2^2+d_2^2d_0^2)+d_0^2d_1^2d_2^2\\& =&
\frac{1}{3}\sum_{i=0}^2d_i^2(d_{i+1}^2-3)(d_{i+2}^2-3).\end{eqnarray*}
This proves~\eqref{chiest}.

To prove~\eqref{alphachigam},
we recall~\eqref{firstalphaeq} and~\eqref{chieq}
and
we write \begin{eqnarray*}-24\alpha \chi& =&6(-2\alpha )(2\chi)\\&\leq& 3((2\alpha )^2+(2\chi )^2)\\&=&
3((d_0^2+d_1^2+d_2^2-1)^2+3(d_0^2+d_1^2+d_2^2)\\&&\qquad-(d_0^2d_1^2+d_1^2d_2^2+d_2^2d_0^2+d_0^4+d_1^4+d_2^4)+d_0^2d_1^2d_2^2)\\&=&
3(d_0^2d_1^2d_2^2+d_0^2d_1^2+d_1^2d_2^2+d_2^2d_0^2+d_0^2+d_1^2+d_2^2+1).\end{eqnarray*}
{F}rom this and~\eqref{DEGAm}, we obtain~\eqref{alphachigam}, as desired.\end{proof}

\section{Napoleonic Triangles and Proof of Theorem~\ref{non-exis}}\label{sec:profthem1}

{F}rom now on, we suppose that~$P_0$, $P_1$ and~$P_2$ are not cogeodesic, namely~$\chi  \not= 0$. Hence, by~\eqref{chimag},
\begin{equation*}
\chi>0.
\end{equation*}
Set also\footnote{Because of these choices, if~$\langle P_2,P_0\rangle =\langle P_0,P_1\rangle$ then~$\langle R_2,R_0\rangle =\langle R_0,R_1\rangle$. Similarly, if~$\langle P_0,P_1\rangle =\langle P_1,P_2\rangle$ then~$\langle R_0,R_1\rangle =\langle R_1,R_2\rangle$, and if~$\langle P_1,P_2\rangle =\langle P_2,P_0\rangle$ then~$\langle R_1,R_2\rangle =\langle R_2,R_0\rangle$.

Our attention is not limited to the case where~$P_0P_1P_2$ is isosceles.} $\epsilon _0=\epsilon _1=\epsilon _2=\epsilon :=\pm1$. 

\begin{lemma}\label{MNEQ}
We have that, for~$i\in \Z/3\Z$, 
\begin{equation}
\label{r1r2}\begin{split}&\gamma \langle R_{i+1},R_{i+2}\rangle\\
&\quad=
(d_i^2+1)\Big( 4\big(\alpha d_{i+1}d_{i+2} +\epsilon \chi  (d_{i+1}+d_{i+2})\big)
- (d_{i+1}^2-1)(d_{i+2}^2-1)+2(d_i^2-1)\Big) \end{split}
\end{equation}
and
\begin{equation}\label{preq}\begin{split}&
\gamma \big(\langle R_{i+2},R_i\rangle - \langle R_i,R_{i+1}\rangle \big)\\&\quad=4(d_{i+2}-d_{i+1})\Big( \alpha (d_0+d_1+d_2-d_0d_1d_2)
+\epsilon \chi (1-d_0d_1-d_1d_2-d_2d_0)\Big) .\end{split}
\end{equation}
\end{lemma}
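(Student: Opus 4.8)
The plan is to compute $\langle R_{i+1},R_{i+2}\rangle$ directly from the explicit formula for the centroids, and then read off \eqref{r1r2} and \eqref{preq} by algebraic manipulation. By the lemma preceding the introduction of the bespoke coordinates, together with the choice $\epsilon_0=\epsilon_1=\epsilon_2=\epsilon$ made just before the statement, we already have a closed expression for $3(1-\langle P_i,P_{i+1}\rangle)(1-\langle P_{i+1},P_{i+2}\rangle)\langle R_{i+1},R_{i+2}\rangle$ in terms of the inner products $\langle P_j,P_k\rangle$, the quantity $\alpha$, and $\chi$ (this lemma is stated for the pair $(R_2,R_0)$, but by the symmetry remarks on $\alpha$ and the cyclic symmetry of $\chi$ the same identity holds cyclically, which is exactly the content needed for general $i\in\Z/3\Z$).

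First I would substitute the coordinate change \eqref{cooR}, i.e. $\langle P_{i+1},P_{i+2}\rangle = \tfrac{1-d_i^2}{2}$ and its cyclic images, into that expression. The prefactor $3(1-\langle P_i,P_{i+1}\rangle)(1-\langle P_{i+1},P_{i+2}\rangle)$ becomes $3\cdot\tfrac{d_{i+2}^2+1}{2}\cdot\tfrac{d_{i+1}^2+1}{2}$; multiplying through by the remaining factor $\tfrac{d_i^2+1}{2}\cdot 4/4$ and comparing with \eqref{DEGAm} shows that clearing denominators produces exactly $\gamma\langle R_{i+1},R_{i+2}\rangle$ on the left, which explains the appearance of the factor $(d_i^2+1)$ on the right. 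On the right-hand side, the term $\sqrt{1-2\langle P_i,P_{i+1}\rangle}\sqrt{1-2\langle P_{i+1},P_{i+2}\rangle}$ becomes $d_{i+2}d_{i+1}$, the bracket $\epsilon_0\sqrt{\cdots}+\epsilon_2\sqrt{\cdots}$ becomes $\epsilon(d_{i+2}+d_{i+1})$, and the leftover polynomial term $-\epsilon_2\epsilon_0(\langle P_i,P_{i+1}\rangle\langle P_{i+1},P_{i+2}\rangle+\langle P_i,P_{i+2}\rangle)$ becomes, after using $\epsilon^2=1$ and substituting, a quadratic polynomial in the $d_j^2$ which one rearranges into $-(d_{i+1}^2-1)(d_{i+2}^2-1)+2(d_i^2-1)$ — this last rearrangement is the only slightly delicate bookkeeping step, and it uses $2\langle P_i,P_{i+1}\rangle=1-d_{i+2}^2$ etc. together with the definition \eqref{firstalp} of $\alpha$ to absorb cross terms. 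Carefully tracking the factor of $2$ from the prefactor versus the factor $4$ advertised in \eqref{r1r2} is where I expect most of the risk of a sign or scaling slip.

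For \eqref{preq}, rather than recomputing, I would simply form the difference of two instances of \eqref{r1r2}: namely $\gamma\langle R_{i+2},R_i\rangle$ (the case with index $i$ replaced by $i+1$, so that the pair is $(R_{i+2},R_i)$ and the outside factor is $d_{i+1}^2+1$) minus $\gamma\langle R_i,R_{i+1}\rangle$ (index $i+2$, outside factor $d_{i+2}^2+1$). Each is a product of $(d_{\ast}^2+1)$ with a bracket; expanding the difference and collecting terms, the symmetric part cancels and what survives is divisible by $(d_{i+2}-d_{i+1})$. Factoring that out and regrouping the coefficient into the two pieces $\alpha(d_0+d_1+d_2-d_0d_1d_2)$ and $\epsilon\chi(1-d_0d_1-d_1d_2-d_2d_0)$ is a finite, if somewhat lengthy, polynomial identity; the structure is forced because $\alpha$ is fully symmetric and $\chi$ cyclic, so only symmetric combinations of $d_0,d_1,d_2$ can multiply them. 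The main obstacle throughout is purely organizational: keeping the cyclic indexing consistent and not dropping factors of $2$ when passing between the $\langle P_j,P_k\rangle$ normalization and the $d_j$ normalization; there is no conceptual difficulty, so I would organize the computation as two short displays (one per formula) with the key substitutions highlighted.
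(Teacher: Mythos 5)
Your proposal is correct and follows essentially the same route as the paper: the paper likewise expands the centroid inner product (via Lemma~\ref{ILLEc} and~\eqref{eq0}) with $\epsilon_0=\epsilon_1=\epsilon_2=\epsilon$, substitutes the coordinates~\eqref{cooR}, clears the prefactor against $\gamma$ to get~\eqref{r1r2}, and then obtains~\eqref{preq} by differencing two instances of~\eqref{r1r2} and factoring out $(d_{i+2}-d_{i+1})$ (the paper merely organizes that last step through an auxiliary quantity $T_i$). One bookkeeping caveat of the kind you yourself flag: for the pair $\langle R_{i+1},R_{i+2}\rangle$ the correct prefactor is $3(1-\langle P_{i+2},P_i\rangle)(1-\langle P_i,P_{i+1}\rangle)=\tfrac34(d_{i+1}^2+1)(d_{i+2}^2+1)$, not $3(1-\langle P_i,P_{i+1}\rangle)(1-\langle P_{i+1},P_{i+2}\rangle)$, so your intermediate index labels are cyclically shifted even though the final expressions you state agree with~\eqref{r1r2}.
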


\begin{proof} 
Using Lemma~\ref{ILLEc} we write that
\begin{eqnarray*}&&
3(1-\langle P_{i+2},P_i\rangle )(1-\langle P_i,P_{i+1}\rangle )
\langle R_{i+1},R_{i+2}\rangle\\&=&
\left\langle
\sqrt{1-2\langle P_{i+2},P_i\rangle }(P_{i+2}+P_i)+\epsilon P_{i+2}\tilde \times P_i,
\sqrt{1-2\langle P_i,P_{i+1}\rangle }(P_i+P_{i+1})
+\epsilon P_i\tilde \times P_{i+1}
\right\rangle .
\end{eqnarray*}
Therefore, recalling also the definitions in~\eqref{firstalp}, \eqref{firstalpBIS} and~\eqref{cooR} and exploiting~\eqref{eq0},
\begin{equation}\label{djiewoygvbdjske7865943}\begin{split}&
3(1-\langle P_{i+2},P_i\rangle )(1-\langle P_i,P_{i+1}\rangle )
\langle R_{i+1},R_{i+2}\rangle\\ =\;&
d_{i+1}d_{i+2}\big(\langle P_0,P_1\rangle
+\langle P_1,P_2\rangle+\langle P_0,P_2\rangle-1\big)
+\epsilon\chi d_{i+1}+\epsilon\chi d_{i+2}\\&\qquad
-\langle P_{i},P_{i+2}\rangle\langle P_i,P_{i+1}\rangle-\langle P_{i+1},P_{i+2}\rangle
\\ =\;&\alpha d_{i+1}d_{i+2}+\epsilon\chi\big(d_{i+1}+d_{i+2}\big)
-\frac{(d_{i+1}^2-1)(d_{i+2}^2-1)}{4}+\frac{d_{i}^2-1}{2}
.\end{split}\end{equation}

Also,
\begin{eqnarray*}
(1-\langle P_{i+2},P_i\rangle )(1-\langle P_i,P_{i+1}\rangle )&=&
\left(1-\frac{1-d_{i+1}^2}{2}\right)
\left(1-\frac{1-d_{i+2}^2}{2}\right)\\
&=&\frac14 (1+d_{i+1}^2) (1+d_{i+2}^2).
\end{eqnarray*}
Plugging this information into~\eqref{djiewoygvbdjske7865943},
we conclude that
\begin{eqnarray*}&&\langle R_{i+1},R_{i+2}\rangle
=\frac1{3(1+d_{i+1}^2) (1+d_{i+2}^2)}\\
&&\qquad\times
\Big(4\big(\alpha d_{i+1}d_{i+2}+\epsilon\chi\big(d_{i+1}+d_{i+2}\big)\big)
-(d_{i+1}^2-1)(d_{i+2}^2-1)+2(d_{i}^2-1)\Big).
\end{eqnarray*}
Thus, the claim in~\eqref{r1r2} follows from
this and the definition of~$\gamma$ in~\eqref{DEGAm}.

Now, to prove~\eqref{preq}, we set 
$$T_i:=(d_{i+1}^2+1)\big(\alpha d_{i+2}d_i +\epsilon \chi  (d_{i+2}+d_i)\big)-(d_{i+2}^2+1)\big(\alpha d_id_{i+1} +\epsilon \chi  (d_i+d_{i+1})\big)$$
and we see that~$T_i$ expands as
\begin{eqnarray*}&&\alpha d_i\big(
d_{i+2}(d_{i+1}^2+1)-d_{i+1}(d_{i+2}^2+1)\big)\\
&&\qquad+\epsilon \chi \big((d_{i+1}^2+1)(d_{i+2}+d_i)-(d_{i+2}^2+1)(d_i+d_{i+1})\big)\\&=&
(d_{i+2}-d_{i+1})\big( \alpha d_i(1-d_{i+1}d_{i+2}) +\epsilon \chi (1-d_0d_1-d_1d_2-d_2d_0)\big) .\end{eqnarray*}
Using this and~\eqref{r1r2}, we find that
\begin{eqnarray*}&&\gamma \big(\langle R_{i+2},R_i\rangle - \langle R_i,R_{i+1}\rangle \big)-4T_i\\&&\qquad=
(d_{i+1}^2+1)\Big( 4\big(\alpha d_{i}d_{i+2} +\epsilon \chi  (d_{i}+d_{i+2})\big)
- (d_{i}^2-1)(d_{i+2}^2-1)+2(d_{i+1}^2-1)\Big)\\&&\qquad\quad
-(d_{i+2}^2+1)\Big( 4\big(\alpha d_{i+1}d_{i} +\epsilon \chi  (d_{i+1}+d_{i})\big)
- (d_{i+1}^2-1)(d_{i}^2-1)+2(d_{i+2}^2-1)\Big)\\&&\qquad\quad
- 4(d_{i+1}^2+1)\big(\alpha d_{i+2}d_i +\epsilon \chi  (d_{i+2}+d_i)\big)+4(d_{i+2}^2+1)\big(\alpha d_id_{i+1} +\epsilon \chi  (d_i+d_{i+1})\big)
\\&&\qquad=
-\left((d_{i+1}^2+1)(d_{i+2}^2-1) -(d_{i+2}^2+1)(d_{i+1}^2-1)\right) (d_i^2-1)+2(d_{i+1}^4-d_{i+2}^4) \\&&\qquad=
-2(d_{i+2}^2  -d_{i+1}^2) (d_i^2-1)+2(d_{i+1}^4-d_{i+2}^4)
\\&&\qquad=
-2(d_{i+2}^2-d_{i+1}^2)(d_0^2+d_1^2+d_2^2-1)\\&&\qquad
=4\alpha (d_{i+2}^2-d_{i+1}^2).\end{eqnarray*}
{F}rom these considerations, we obtain~\eqref{preq}.
\end{proof}

\begin{corollary} Suppose that~$P_0P_1P_2$ is not equilateral.

Then, $R_0R_1R_2$ is equilateral if and only if
\begin{equation}\label{r0r1eqg}  \alpha (d_0+d_1+d_2-d_0d_1d_2) +\epsilon \chi   (1-d_0d_1-d_1d_2-d_2d_0)
 = 0. \end{equation}\end{corollary}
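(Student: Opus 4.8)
The plan is to read the statement off directly from equation~\eqref{preq} of Lemma~\ref{MNEQ}. First I would recall that the hyperbolic length of the side~$R_iR_{i+1}$ equals~$\arccosh(-\langle R_i,R_{i+1}\rangle)$, and since~$\arccosh$ is strictly increasing on~$[1,+\infty)$, the triangle~$R_0R_1R_2$ is equilateral if and only if the three quantities~$\langle R_0,R_1\rangle$, $\langle R_1,R_2\rangle$ and~$\langle R_2,R_0\rangle$ all coincide; equivalently, $\langle R_{i+2},R_i\rangle-\langle R_i,R_{i+1}\rangle=0$ for every~$i\in\Z/3\Z$.

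Next I would abbreviate
$$K:=\alpha (d_0+d_1+d_2-d_0d_1d_2) +\epsilon \chi   (1-d_0d_1-d_1d_2-d_2d_0),$$
so that~\eqref{preq} reads~$\gamma\big(\langle R_{i+2},R_i\rangle-\langle R_i,R_{i+1}\rangle\big)=4(d_{i+2}-d_{i+1})K$ for all~$i\in\Z/3\Z$. Since~$\gamma=3(d_0^2+1)(d_1^2+1)(d_2^2+1)>0$ by~\eqref{DEGAm}, the condition that~$R_0R_1R_2$ be equilateral becomes simply~$(d_{i+2}-d_{i+1})K=0$ for every~$i$. If~$K=0$ this is automatic, so~$R_0R_1R_2$ is equilateral. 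Conversely, if~$R_0R_1R_2$ is equilateral, then by~\eqref{cooR} the hypothesis that~$P_0P_1P_2$ be non-equilateral means that~$d_0$, $d_1$, $d_2$ are not all equal, hence at least one of the differences~$d_{i+2}-d_{i+1}$ is nonzero; for that index the relation~$(d_{i+2}-d_{i+1})K=0$ forces~$K=0$. This establishes the asserted equivalence.

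There is essentially no hard step here once~\eqref{preq} is available: the only points requiring a little care are the translation of ``equilateral'' into equality of the three inner products (via strict monotonicity of~$\arccosh$, i.e. the fact that side lengths determine and are determined by the inner products), and the observation that a non-equilateral triangle necessarily has a pair of distinct consecutive coordinates~$d_{i+1}\neq d_{i+2}$, which is precisely what permits cancelling that factor. The genuine computational effort — the derivation of~\eqref{preq}, which in turn rested on the expansion of~$T_i$ and on~\eqref{r1r2} — has already been carried out in Lemma~\ref{MNEQ}, so the proof of this corollary is just bookkeeping.
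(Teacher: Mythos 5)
Your proof is correct and follows essentially the same route as the paper: both read the equivalence directly off equation~\eqref{preq}, using that $\gamma>0$ and that a non-equilateral $P_0P_1P_2$ forces $d_{i+1}\neq d_{i+2}$ for some $i$, so that the common factor can be cancelled. Your version merely makes explicit the (correct, and implicitly used) translation between ``equilateral'' and equality of the Minkowski inner products via monotonicity of $\arccosh$.
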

 
\begin{proof} By Lemma~\ref{MNEQ}, 
we have that if~\eqref{r0r1eqg} is satisfied then~$R_0R_1R_2$ is equilateral.

Conversely, if~$R_0R_1R_2$ is equilateral, then~$\langle R_{i+2},R_i\rangle =\langle R_i,R_{i+1}\rangle $ for~$i\in \Z/3\Z$. In light of Lemma~\ref{MNEQ},
this is true if and only if
either~$d_{i+1}=d_{i+2}$ or~\eqref{r0r1eqg} holds true.
Notice that it cannot be that~$d_{i+1}=d_{i+2}$ for all~$i\in \Z/3\Z$,
since~$P_0P_1P_2$ is not equilateral. Therefore, in this case, we see that~\eqref{r0r1eqg} must be satisfied.
\end{proof}

With this preliminary work, we can now complete the proof of the non-existence of non-trivial Napoleonic triangles in the hyperbolic plane.

\begin{proof}[Proof of Theorem~\ref{non-exis}]
Suppose that~$P_0P_1P_2$ is Napoleonic, i.e.~$R_0R_1R_2$ is equilateral.
If~$P_0P_1P_2$ is not equilateral then~\eqref{r0r1eqg} implies that
\begin{equation}\label{algeq}
\alpha ^2(d_0+d_1+d_2-d_0d_1d_2)^2-\chi  ^2 (1-d_0d_1-d_1d_2-d_2d_0)^2=0
,\end{equation} 
with~$\alpha$ and~$\chi ^2$ given by~\eqref{firstalphaeq} and~\eqref{chieq} as symmetric polynomials in~$d_0$, $d_1$ and~$d_2$. After these substitutions, 
one sees that the left-hand side of~\eqref{algeq} is{\footnotesize
\begin{eqnarray*}&&\frac14 \Big( (1-d_0^2-d_1^2-d_2^2)^2(d_0+d_1+d_2-d_0d_1d_2)^2\\&&\quad
- \Big(3(d_0^2+d_1^2+d_2^2)-(d_0^2d_1^2+d_1^2d_2^2+d_2^2d_0^2+d_0^4+d_1^4+d_2^4)+d_0^2d_1^2d_2^2\Big)
(1-d_0d_1-d_1d_2-d_2d_0)^2\Big),
\end{eqnarray*}}
that in turn equals to
$$
\frac{\gamma}{24}\Big((d_0-d_1)^2+(d_1-d_2)^2+(d_2-d_0)^2\Big)\Big(d_0^2+d_1^2+d_2^2+d_0d_1+d_1d_2+d_2d_0-2\Big),
$$ which is strictly positive.

Accordingly, we have that~$P_0P_1P_2$ must be equilateral.
In conclusion, all Napoleonic triangles in~$\H$ are equilateral, as 
claimed. \end{proof}

\section{Napoleonic Progressions and proof of Theorem~\ref{REPENE}}
\label{sec:proofthm2}

{F}rom now on, we deal with repeated Napoleonization.
For this purpose, we use the notation introduced in Section~\ref{u4i3tifgesec:bespoke} and,
after a cyclic relabelling, we can suppose that
\begin{equation}\label{51PRE}
d_0=\max \{ d_0,d_1,d_2\}.\end{equation}
For~$i\in \Z/3\Z$, we let 
\begin{equation}\label{eidef93047603998765EI}
e_i:=\sqrt{1-2\langle R_{i+1},R_{i+2}\rangle}.\end{equation}
We remark that~$e_i$ are well defined, in light of~\eqref{csinq}
and the fact that~$R_{i+1}$, $R_{i+2}\in\H$.

We also define
$$r_d:=\frac{4}{\gamma }\Big( -2\alpha (d_0d_1d_2-d_0-d_1-d_2) -2\epsilon \chi (d_0d_1+d_1d_2+d_2d_0-1)\Big) $$
and
\begin{equation}\label{DERI} r_i:=\frac{\vert r_d\vert}{d_{i+1}+d_{i+2}}.\end{equation}

We point out that,
with this notation, equation~\eqref{preq} reads 
as \begin{equation}\label{rieq.p}
e_{i+2}^2-e_{i+1}^2=r_d(d_{i+2}-d_{i+1}).\end{equation}
Moreover, we observe that for~$\epsilon =-1$, we have that~$r_d>0$. 
Therefore, in this case, $e_0$, $e_1$ and~$e_2$ are in the same order as~$d_0$, $d_1$ and~$d_2$.

The main calculation needed to understand repeated Napoleonization is as follows:

\begin{proposition}\label{REPEPRO}
We have that
\begin{equation}\label{rieq}
r_i\leq \rho := \frac{2}{3}+\frac{2}{27}+\frac{1}{3\sqrt{3}}\approx 0.93319.
\end{equation}
\end{proposition}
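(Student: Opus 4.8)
The plan is to reduce the claimed bound to a two–variable estimate and then to a couple of elementary one–variable maximizations. First I would use the normalization \eqref{51PRE}: since $d_0=\max\{d_0,d_1,d_2\}$, the denominator $d_1+d_2$ appearing in \eqref{DERI} is the smallest of the three possible denominators $d_{i+1}+d_{i+2}$, so $r_0\ge r_1$ and $r_0\ge r_2$. Hence it suffices to prove $r_0=|r_d|/(d_1+d_2)\le\rho$.

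Next I would pin down the relevant signs. By \eqref{triineq.2} each $d_i\ge\sqrt3$, which forces $\alpha<0$ (from \eqref{firstalphaeq}, $2\alpha=1-d_0^2-d_1^2-d_2^2\le 1-9$), $d_0d_1+d_1d_2+d_2d_0-1\ge 8>0$, and $d_0d_1d_2-d_0-d_1-d_2\ge 0$ (since $d_1d_2\ge3$ gives $d_0d_1d_2\ge 3d_0=d_0+2d_0\ge d_0+d_1+d_2$), while $\chi\ge0$ by \eqref{chimag}. Writing $r_d=\tfrac{8}{\gamma}\big((-\alpha)(d_0d_1d_2-d_0-d_1-d_2)-\epsilon\chi(1-d_0d_1-d_1d_2-d_2d_0)\big)$, the two summands are, up to the sign of $\epsilon$, nonnegative, so in both cases $\epsilon=\pm1$ one obtains $|r_d|\le\tfrac{8}{\gamma}\big((-\alpha)(d_0d_1d_2-d_0-d_1-d_2)+\chi(d_0d_1+d_1d_2+d_2d_0-1)\big)$, and therefore $r_0\le\mathcal A+\mathcal C$ with $\mathcal A:=\tfrac{8(-\alpha)(d_0d_1d_2-d_0-d_1-d_2)}{\gamma(d_1+d_2)}$ and $\mathcal C:=\tfrac{8\chi(d_0d_1+d_1d_2+d_2d_0-1)}{\gamma(d_1+d_2)}$.

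Then I would bound $\mathcal A$ and $\mathcal C$ separately, and this is where the three constants in $\rho$ appear. For $\mathcal A$: discard the negative part $-(d_0+d_1+d_2)$, use AM--GM in the form $\tfrac{d_1d_2}{d_1+d_2}\le\tfrac{\sqrt{d_1d_2}}{2}\le\tfrac{d_0}{2}$ (the last step because $d_1,d_2\le d_0$), substitute $-2\alpha=d_0^2+d_1^2+d_2^2-1$ and $\gamma=3(d_0^2+1)(d_1^2+1)(d_2^2+1)$, and finally invoke the triangle inequality \eqref{triineq}, $d_0^2-1\le(d_1^2-1)(d_2^2-1)$, which yields $d_0^2+d_1^2+d_2^2-1\le d_1^2d_2^2+1<(d_1^2+1)(d_2^2+1)$; this collapses $\mathcal A$ to at most $\tfrac{2d_0^2}{3(d_0^2+1)}<\tfrac23$. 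For $\mathcal C$: use \eqref{alphachigam} in the form $\tfrac{8\chi}{\gamma}\le\tfrac{1}{3(-\alpha)}=\tfrac{2}{3(d_0^2+d_1^2+d_2^2-1)}$ and split $d_0d_1+d_1d_2+d_2d_0-1=d_0(d_1+d_2)+(d_1d_2-1)$; the first resulting piece is then $\le\tfrac{2d_0}{3(d_0^2+3)}$ (since $d_1^2+d_2^2-1\ge3$), a function of $d_0$ alone that is decreasing for $d_0\ge\sqrt3$ and hence at most its value $\tfrac1{3\sqrt3}$ at $d_0=\sqrt3$; the second piece, after using $d_0^2\ge3$, $d_1^2+d_2^2\ge 2d_1d_2$ and $d_1+d_2\ge2\sqrt{d_1d_2}$, reduces to $\tfrac{d_1d_2-1}{6\sqrt{d_1d_2}(d_1d_2+1)}$, a function of $d_1d_2\ge3$ alone that an elementary calculus check bounds by $\tfrac2{27}$. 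Summing, $r_0\le\mathcal A+\mathcal C\le\tfrac23+\tfrac1{3\sqrt3}+\tfrac2{27}=\rho$.

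The hard part is the bookkeeping in these two bounds: the crudest estimates (keeping only $d_0d_1d_2$ in $\mathcal A$, or bounding $d_0$ by itself without any reference to $d_1,d_2$) blow up because $d_0$ is a priori unbounded, so one must spend the triangle inequality \eqref{triineq} to trade each stray factor of $d_0$ against $d_1,d_2$ at exactly the right moment, and one must choose which inequality to apply in which order so that each of the three resulting one–variable expressions lands under its intended constant rather than a larger one. A secondary technical point is to carry both sign choices $\epsilon=\pm1$ through uniformly, which the absolute–value reduction in the second step takes care of.
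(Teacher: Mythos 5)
Your argument is correct, and it shares the paper's skeleton: reduce to $r_0$ via \eqref{51PRE}, use the sign information to bound $|r_d|$ by the sum of the $\alpha$-part and the $\chi$-part, and control these with \eqref{alphachigam}, \eqref{triineq}, \eqref{triineq.2} and an AM--GM/Cauchy--Schwarz step. The internal routing is genuinely different, though: the three summands of $\rho$ arise from different sources in the two proofs. The paper first applies Cauchy--Schwarz to replace $d_0d_1+d_1d_2+d_2d_0-1$ by $d_0^2+d_1^2+d_2^2-1=-2\alpha$, so that \eqref{alphachigam} converts the entire $\chi$-term into the constant $\tfrac23$ (hence $\tfrac1{3\sqrt3}$ after dividing by $d_1+d_2\ge 2\sqrt3$), and then extracts both $\tfrac23$ and $\tfrac2{27}$ from the $\alpha$-term via $\tfrac{d_1^2d_2^2+1}{d_0d_1d_2}=\tfrac{d_1d_2}{d_0}+\tfrac1{d_0d_1d_2}$. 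You instead compress the whole $\alpha$-term into a single $\tfrac23$ (your intermediate bound $\mathcal A<\tfrac{2d_0^2}{3(d_0^2+1)}$ is in fact slightly sharper than what the paper keeps at that stage) and extract both $\tfrac1{3\sqrt3}$ and $\tfrac2{27}$ from the $\chi$-term by splitting its coefficient as $d_0(d_1+d_2)+(d_1d_2-1)$ and running two one-variable maximizations. Both routes give exactly the same $\rho$, which is all the iteration in Section 5 requires; the trade-off is that your version needs the extra calculus check that $\tfrac{s-1}{6\sqrt s\,(s+1)}\le\tfrac2{27}$ for $s=d_1d_2\ge3$ (true with room to spare: the maximum is about $0.050$, attained at $s=2+\sqrt5$), whereas the paper's chain is purely algebraic. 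One cosmetic slip to fix: your displayed formula for $r_d$ has the sign of the $\epsilon\chi$ term reversed relative to the definition in the paper; this is harmless here because you immediately pass to $|r_d|$ with $\epsilon$ ranging over $\pm1$, but it would matter wherever the sign of $r_d$ itself is used (as in the paper's observation that $r_d>0$ when $\epsilon=-1$).
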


\begin{proof}
Applying the Cauchy-Schwarz inequality to the vectors~$(d_0,d_1,d_2)$ and~$(d_1,d_2,d_0)$, we see that
$$d_0d_1+d_1d_2+d_2d_0\leq d_0^2+d_1^2+d_2^2.$$
This observation and~\eqref{alphachigam} give that
\begin{equation}\label{suw4v83e2345678fbrekg}\begin{split}
\vert r_d\vert \leq\;& \frac{4}{\gamma }\Big( -2\alpha (d_0d_1d_2-d_0-d_1-d_2) +2\chi (d_0d_1+d_1d_2+d_2d_0-1)\Big)
\\ \leq\;& 
\frac{4}{\gamma }\Big( -2\alpha (d_0d_1d_2-d_0-d_1-d_2) +2\chi (d_0^2+d_1^2+d_2^2-1)\Big)
\\ \leq\;& 
\frac{4}{\gamma }\Big( -2\alpha (d_0d_1d_2-d_0-d_1-d_2)-4\alpha \chi \Big)\\ \leq\;& \frac{4}{\gamma }\Big( -2\alpha (d_0d_1d_2-d_0-d_1-d_2) \Big) +\frac{2}{3}.\end{split}\end{equation}

Furthermore, \begin{eqnarray*}
\frac{4}{\gamma }\Big( -2\alpha (d_0d_1d_2-d_0-d_1-d_2)\Big)
&=&\frac{4}{3}\frac{(d_0^2+d_1^2+d_2^2-1)(d_0d_1d_2-d_0-d_1-d_2)}{(d_0^2+1)(d_1^2+1)(d_2^2+1)}\\&\leq&
\frac{4}{3}\left( \frac{d_0^2+d_1^2+d_2^2-1}{d_0d_1d_2}\right) \\ &\leq& \frac{4}{3}\left( \frac{(d_1^2-1)(d_2^2-1)+d_1^2+d_2^2}{d_0d_1d_2}\right)\\&=&\frac{4}{3}\left( \frac{d_1^2d_2^2+1}{d_0d_1d_2}\right) ,\end{eqnarray*}
where the last inequality uses~\eqref{triineq}. 

Accordingly, using~\eqref{triineq.2},
we conclude that\begin{equation*}\displaystyle\frac{4}{\gamma }\Big( -2\alpha (d_0d_1d_2-d_0-d_1-d_2)\Big) \leq
\frac{4}{3}\left( \frac{d_1d_2}{d_0}+\frac{1}{3\sqrt{3}}\right).
\end{equation*}
We now observe that
\begin{eqnarray*}
2d_1d_2=d_1d_2+d_1d_2\le d_0\left( d_1+d_2\right),
\end{eqnarray*}
thanks to~\eqref{51PRE}, and therefore
\begin{equation*}\displaystyle\frac{4}{\gamma }\Big( -2\alpha (d_0d_1d_2-d_0-d_1-d_2)\Big) \leq
\frac{2}{3}\left( d_1+d_2\right)+\frac{4}{9\sqrt{3}} .
\end{equation*}
Plugging this information into~\eqref{suw4v83e2345678fbrekg}
we thereby find that
$$ |r_d|\le \frac{2}{3}\left( d_1+d_2\right)+\frac{4}{9\sqrt{3}}
+\frac{2}{3}.
$$

Consequently, recalling~\eqref{DERI} and using~\eqref{triineq.2}
and~\eqref{51PRE}, we have that, for any~$i\in \Z /3\Z$, 
$$
r_i\le \frac{1}{d_{i+1}+d_{i+2}}\left(
\frac{2}{3}\left( d_1+d_2\right)+\frac{4}{9\sqrt{3}}
+\frac{2}{3}\right)
\leq \frac{2}{3}+\frac{2}{27}+\frac{1}{3\sqrt{3}},$$
which gives~\eqref{rieq}, as desired.\end{proof}

For our purposes, Proposition~\ref{REPEPRO} is important, since it shows that
repeated Napoleonization, with~$\epsilon =\pm 1$, gives a sequence~$\{ P_0^{(k)}P_1^{(k)}P_2^{(k)}:k\geq 0\}$ of hyperbolic triangles, with geometrically decreasing differences of lengths of sides. This allows us to complete the proof of Theorem~\ref{REPENE}: in this respect, we distinguish the contractive iterations when~$\epsilon=1$ and when~$\epsilon=-1$.

\begin{proof}[Proof of Theorem~\ref{REPENE} when~$\epsilon =1$]
For~$\epsilon =1$,
we deduce from~\eqref{DEGAm}, \eqref{r1r2}
and~\eqref{eidef93047603998765EI} that
\begin{equation}\label{newpreqplus}\begin{split}
e_{i}^2-3&=1-2\langle R_{i+1}, R_{i+2}\rangle -3\\&=-2(1+\langle R_{i+1}, R_{i+2}\rangle)\\&=
-\frac{2}\gamma\Big( \gamma+
4(d_i^2+1)\big(\alpha d_{i+1}d_{i+2} + \chi  (d_{i+1}+d_{i+2})\big)
\\&\qquad\qquad - (d_i^2+1)(d_{i+1}^2-1)(d_{i+2}^2-1)+2(d_i^2-1)(d_i^2+1)\Big)\\&=
-\frac{2}{3(d_{i+1}^2+1)(d_{i+2}^2+1)}\Big( 3(d_{i+1}^2+1)(d_{i+2}^2+1)
+4 \alpha d_{i+1}d_{i+2}
\\&\qquad\qquad+4\chi  (d_{i+1}+d_{i+2}) -  (d_{i+1}^2-1)(d_{i+2}^2-1)+2(d_i^2-1) \Big)
\\&=\frac{2}{3(d_{i+1}^2+1)(d_{i+2}^2+1)}U_i,
\end{split}\end{equation}
where
\begin{eqnarray*}U_i&:=& 
-4\chi (d_{i+1}+d_{i+2})-4\alpha d_{i+1}d_{i+2}+(d_{i+1}^2-1)(d_{i+2}^2-1)\\&&\qquad
-2(d_{i}^2-1)-3(d_{i+1}^2+1)(d_{i+2}^2+1).\end{eqnarray*}

We also observe that, recalling~\eqref{firstalphaeq},
\begin{equation}\label{1214364ydhfbcbjith}\begin{split}
U_i \le\;&
-4\alpha d_{i+1}d_{i+2}+(d_{i+1}^2-1)(d_{i+2}^2-1)-2(d_{i}^2-1)-3(d_{i+1}^2+1)(d_{i+2}^2+1)
\\=\;&
-4\alpha (d_{i+1}d_{i+2}-1)
-2+2d_0^2+2d_1^2+2d_2^2 
+(d_{i+1}^2-1)(d_{i+2}^2-1)\\&\qquad\qquad-2(d_{i}^2-1)-3(d_{i+1}^2+1)(d_{i+2}^2+1)
\\=\;&-4\alpha (d_{i+1}d_{i+2}-1)-2(d_{i+1}^2+1)(d_{i+2}^2+1)\\=\;&
-2 \Big( (1-d_0^2-d_1^2-d_2^2) (d_{i+1}d_{i+2}-1)+(d_{i+1}^2+1)(d_{i+2}^2+1)\Big)\\
=\;& 
2 \Big( -d_{i+1}d_{i+2}+d_i^2d_{i+1}d_{i+2}+d_{i+1}^3d_{i+2}+d_{i+1}d_{i+2}^3-d_i^2\\&\qquad\qquad-2d_{i+1}^2
-2d_{i+2}^2 -d_{i+1}^2d_{i+2}^2
\Big)
\\=\;&
2\Big( (d_{i+1}d_{i+2}-1)(d_{i}^2-3)
+(d_{i+1}^2+d_{i+2}^2-3-d_{i+1}d_{i+2})\\&\qquad\qquad
+(d_{i+1}^2-d_{i+1}d_{i+2}+d_{i+2}^2)(d_{i+1}d_{i+2}-3)  \Big).
\end{split}\end{equation}

Now we set~$\mu _d:=\max \{ d_j^2-3:j\in \Z /3\Z\}$ and we claim that
\begin{equation}\label{wqitw4y8t21234567dsjkgfjpswr9t}\begin{split} &
(d_{i+1}d_{i+2}-1)(d_{i}^2-3)
+(d_{i+1}^2+d_{i+2}^2-3-d_{i+1}d_{i+2})
\\&\qquad+(d_{i+1}^2-d_{i+1}d_{i+2}+d_{i+2}^2)(d_{i+1}d_{i+2}-3)
\le ( d_{i+1}^2+d_{i+2}^2+1 )\mu _d.
\end{split}
\end{equation}
Indeed, if~$\mu_d=0$ then, in light of~\eqref{triineq.2}, we have that~$d_i=\sqrt{3}$ for all~$i\in \Z /3\Z$ and so
also the left-hand side
of~\eqref{wqitw4y8t21234567dsjkgfjpswr9t} equals zero. Therefore, in this case we are done. Hence, from now on we suppose that~$\mu_d\neq0$.

In this case, we point out that
\begin{eqnarray*}&&
d_{i+1}d_{i+2}-1
+\frac{d_{i+1}^2+d_{i+2}^2-3-d_{i+1}d_{i+2}}{\mu_d}
+d_{i+1}^2-d_{i+1}d_{i+2}+d_{i+2}^2\\&=&
\frac{d_{i+1}^2+d_{i+2}^2-3-d_{i+1}d_{i+2}-\mu_d}{\mu_d}
+d_{i+1}^2+d_{i+2}^2\\
&\le&\frac{d_{i+2}^2-d_{i+2}d_{i+1}}{\mu_d}
+d_{i+1}^2+d_{i+2}^2\\&\le&
\frac{d_{i+2}^2-3}{\mu_d}
+d_{i+1}^2+d_{i+2}^2\\&\le&1+d_{i+1}^2+d_{i+2}^2,
\end{eqnarray*}
where~\eqref{triineq.2} has also been used.
This implies~\eqref{wqitw4y8t21234567dsjkgfjpswr9t}, as desired.

Using the information in~\eqref{wqitw4y8t21234567dsjkgfjpswr9t}
into~\eqref{1214364ydhfbcbjith}, we obtain that
$$U_i\le 2(d_{i+1}^2+d_{i+2}^2+1 )\mu _d.$$
Hence, by~\eqref{newpreqplus}, 
\begin{equation}\label{mnbvcreReRTEiewoy}
e_i^2-3\leq \frac{4}{3}\left( \frac{
d_{i+1}^2+d_{i+2}^2+1}{d_{i+1}^2d_{i+2}^2+d_{i+1}^2+d_{i+2}^2+1}\right)\mu _d.\end{equation}

Now we point out that
$$
\frac{d_{i+1}^2+d_{i+2}^2+1}{d_{i+1}^2d_{i+2}^2}
=\frac1{d_{i+2}^2}+\frac1{d_{i+1}^2}+\frac{1}{d_{i+1}^2d_{i+2}^2}
\le\frac1{3}+\frac1{3}+\frac1{9}=\frac{7}{9}
,$$
thanks to~\eqref{triineq.2}, which gives that
\begin{equation*}
\frac{
d_{i+1}^2+d_{i+2}^2+1}{d_{i+1}^2d_{i+2}^2+d_{i+1}^2+d_{i+2}^2+1}\le
\frac{7}{16}.
\end{equation*}
{F}rom this and~\eqref{mnbvcreReRTEiewoy} we deduce that
$$e_i^2-3\leq \frac{7}{12}\mu _d.$$

Consequently, setting~$\mu _e:=\max \{ e_i^2-3:i\in \Z /3\Z \}$, we find that
\begin{equation}\label{contreq}
\mu _e\leq \frac{7}{12}\mu _d.
\end{equation}
Thus,
for~$\epsilon =1$ the hyperbolic triangles~$P_0^{(k)}P_1^{(k)}P_2^{(k)}$ satisfy
$$0\leq \max \big\{ (d_i^{(k)})^2-3:i\in \Z  /3\Z  \big\}\leq \left(\frac{7}{12}\right)^k\mu _d.$$ 
In particular, for~$ i=0,1,2$,
$$\lim_{k\rightarrow +\infty}d_i^{(k)}=\sqrt{3},$$
establishing the desired result.\end{proof}

\begin{proof}[Proof of Theorem~\ref{REPENE} when~$\epsilon =-1$]
For~$\epsilon =-1$, we have that~$r_d>0$. Accordingly, without loss of generality, we may suppose that~$e_{i_1}\leq e_{i_2}\leq e_{i_0}$, where~$d_{i_1}\leq d_{i_2}\leq d_{i_0}$, with~$i_0=0$ and~$\{ i_1,i_2\} =\{ 1,2\}$. 

By~\eqref{DEGAm}, \eqref{r1r2} and~\eqref{eidef93047603998765EI}, we have that
\begin{equation}\label{newpreq}\begin{split}
e_{0}^2-3&= 1-2\langle R_{1},R_{2}\rangle-3\\&=-2(1+\langle R_{1},R_{2}\rangle)\\&=
-\frac{2}\gamma\Big(\gamma+ 4(d_0^2+1)\big(\alpha d_{1}d_{2} - \chi  (d_{1}+d_{2})\big)\\&\qquad\qquad
- (d_0^2+1)(d_{1}^2-1)(d_{2}^2-1)+2(d_0^2+1)(d_0^2-1)\Big)\\&=\
-\frac{2}{3(d_1^2+1)(d_2^2+1)}\Big(3(d_1^2+1)(d_2^2+1)+ 4 \big(\alpha d_{1}d_{2} - \chi  (d_{1}+d_{2})\big)\\&\qquad\qquad
-  (d_{1}^2-1)(d_{2}^2-1)+2 (d_0^2-1)\Big)
\\&=\frac{2}{3(d_1^2+1)(d_2^2+1)}U_0,
\end{split}
\end{equation}
with
$$U_0:=4\chi (d_{1}+d_{2})-4\alpha d_{1}d_{2}+(d_{1}^2-1)(d_{2}^2-1)-2(d_{0}^2-1)-3(d_{1}^2+1)(d_{2}^2+1).$$

We remark that, using~\eqref{firstalphaeq},
\begin{equation}\label{fewt12345678kjhgfweur4y}\begin{split}
U_0=\;&
4\chi (d_{1}+d_{2})+4\alpha (1-d_{1}d_{2})-2+2d_0^2+2d_1^2+2d_2^2
\\&\qquad\qquad
+(d_{1}^2-1)(d_{2}^2-1)-2(d_{0}^2-1)-3(d_{1}^2+1)(d_{2}^2+1)\\
=\;&
4\chi (d_{1}+d_{2})+4\alpha (1-d_{1}d_{2})-2(d_{1}^2+1)(d_{2}^2+1)\\=\;&
4\chi (d_{1}+d_{2})+2(1-d_{0}^2-d_{1}^2-d_{2}^2)(1-d_{1}d_{2})-2(d_{1}^2+1)(d_{2}^2+1)\\=\;&
2\Big(2\chi (d_{1}+d_{2})-d_{0}^2-2d_{1}^2-2d_{2}^2-d_{1}d_{2}
+(d_{0}^2+d_{1}^2+d_{2}^2)d_{1}d_{2}-d_{1}^2d_{2}^2  \Big)\\=\;&
2\Big(2\chi (d_{1}+d_{2}) + (d_{1}d_{2}-1)(d_{0}^2-3)
+(d_{1}^2+d_{2}^2-3-d_{1}d_{2})\\&\qquad\qquad
+(d_{1}^2-d_{1}d_{2}+d_{2}^2)(d_{1}d_{2}-3)  \Big)\\ \leq\; &
2\Bigg( (d_{1}+d_{2})\sqrt{\frac{1}{3}\sum_{i=0}^2d_i^2(d_{i+1}^2-3)(d_{i+2}^2-3)} \\&\qquad\qquad+
(d_{1}d_{2}-1)(d_{0}^2-3)+
(d_{1}^2+d_{2}^2-3-d_{1}d_{2})
\\&\qquad\qquad+ (d_{1}^2-d_{1}d_{2}+d_{2}^2)(d_{1}d_{2}-3) \Bigg) ,
\end{split}\end{equation}
where~\eqref{chiest} is used for the inequality. 

Also, we claim that
\begin{equation}\label{du947593tgfqgdquyWWWWyasjwgqeiuq}
\begin{split}&
(d_{1}d_{2}-1)(d_{0}^2-3)+
(d_{1}^2+d_{2}^2-3-d_{1}d_{2})
+ (d_{1}^2-d_{1}d_{2}+d_{2}^2)(d_{1}d_{2}-3)\\&\qquad\qquad\le
(d_1^2+d_2^2)(d_0^2-3).
\end{split}
\end{equation}
Indeed, if~$d_0=\sqrt{3}$, then~\eqref{triineq.2} and~\eqref{51PRE}
give that also~$d_1=d_2=\sqrt{3}$, and so
the left-hand side of~\eqref{du947593tgfqgdquyWWWWyasjwgqeiuq}
vanishes as well, thus establishing
the desidered inequality.

If instead~$d_0>\sqrt{3}$, we recall~\eqref{51PRE} and we see that
\begin{eqnarray*}
&&d_{1}d_{2}-1+
\frac{d_{1}^2+d_{2}^2-3-d_{1}d_{2}}{d_0^2-3}
+ d_{1}^2-d_{1}d_{2}+d_{2}^2\\&=&
\frac{d_{1}^2+d_{2}^2-3-d_{1}d_{2}-d_0^2+3}{d_0^2-3}
+ d_{1}^2+d_{2}^2\\&=&
\frac{\min\{d_{1}^2,d_{2}^2\}+\max\{d_{1}^2,d_{2}^2\}
-d_{1}d_{2}-d_0^2}{d_0^2-3}
+ d_{1}^2+d_{2}^2\\&\le&
\frac{\min\{d_{1}^2,d_{2}^2\}-d_{1}d_{2}}{d_0^2-3}
+ d_{1}^2+d_{2}^2\\
&=&\frac{\min\{d_{1},d_{2}\}\left(\min\{d_{1},d_{2}\}
-\max\{d_{1},d_{2}\}\right)}{d_0^2-3}
+ d_{1}^2+d_{2}^2\\&\le&d_{1}^2+d_{2}^2,
\end{eqnarray*}
which implies the desired inequality in~\eqref{du947593tgfqgdquyWWWWyasjwgqeiuq} in this case as well.

{F}rom~\eqref{fewt12345678kjhgfweur4y}
and~\eqref{du947593tgfqgdquyWWWWyasjwgqeiuq},
and recalling also~\eqref{51PRE}, we thereby conclude that
\begin{eqnarray*}&&U_0\leq 2\left( (d_{1}+d_{2})\sqrt{\frac{1}{3}\sum_{i=0}^2d_i^2} +
d_1^2+d_2^2\right)(d_0^2-3)\\&&\qquad\qquad\le
2\left( 2d_{0} \sqrt{\frac{1}{3}\sum_{i=0}^2d_0^2} +
2d_0^2\right)(d_0^2-3)
\leq 8d_0^2(d_0^2-3).\end{eqnarray*}
This and~\eqref{newpreq} entail that 
\begin{equation}\label{ratest}e_{0}^2-3\leq
\frac{16 d_0^2 (d_0^2-3)}{3(d_1^2+1)(d_2^2+1)}.
\end{equation}

Now, writing
$$D_i^{(k)}:=(d_i^{(k)})^2:=1-2\langle P_{i+1}^{(k)},P_{i+2}^{(k)}\rangle,$$ we have that$$3\leq D_1^{(k)},D_2^{(k)}\leq D_0^{(k)}.$$ 
With this notation,
we deduce from~\eqref{rieq.p} and~\eqref{rieq} that
\begin{equation}
\label{newrieq}D_0^{(k+1)}-D_j^{(k+1)}\leq \rho (D_0^{(k)}-D_j^{(k)})\qquad \hbox{for}\quad j=1,2,\end{equation}
where~$\rho \in (0,1)$ is independent of~$k$, and also independent of~$P_0^{(0)}P_1^{(0)}P_2^{(0)}:=P_0P_1P_2$. 

Moreover~\eqref{ratest} becomes \begin{equation}
\label{newratest}D_0^{(k+1)}-3\leq \frac{16D_0^{(k)}(D_0^{(k)}-3)}{3(D_1^{(k)}+1)(D_2^{(k)}+1)}.
\end{equation}

By~\eqref{newrieq}, we can choose~$k_0$ so large that
$$\delta _0:=\rho ^{k_0}\max\Big\{ D_0^{(0)}-D_1^{(0)}, D_0^{(0)}-D_2^{(0)}
\Big\}<1.$$
In this way, we see that, for all~$k\geq k_0$,
$$ D_0^{(k)}- D_j^{(k)}\le \rho^{k+1}\big(
D_0^{(0)}- D_j^{(0)}\big)\le \rho^{k_0}\big(
D_0^{(0)}- D_j^{(0)}\big) \le\delta_0.
$$
Hence, by this and~\eqref{newratest}, for~$k\geq k_0$,
\begin{eqnarray*}&&D_0^{(k+1)}\leq 3+ \frac{16D_0^{(k)}(D_0^{(k)}-3)}{3(D_0^{(k)}+1-\delta _0)^2}\\&&\qquad=
3+\frac{16}{3}\left(
1-\frac{1-\delta _0}{D_0^{(k)}+1-\delta _0}\right)\left(1-\frac{4-\delta _0}{D_0^{(k)}+1-\delta _0}\right)<\frac{25}{3},\end{eqnarray*}
namely~$\beta _0:=25/3$ is an upper bound for the sequence~$\{ D_0^{(k)}:k>k_0\}$.

We will now iterate this argument as follows.
For~$p\in\N$, suppose that we have~$\delta _p\ge0$ and an upper bound~$\beta _p$ for the sequence~$\{ D_0^{(k)}:k>k_p\}$ for some~$k_p\in\N$. 
Notice that, in light of~\eqref{newratest}, the upper bound~$\beta _p\ge3$. 
Suppose also that~$\delta _p>0$ if~$\beta_p >3$, and~$\delta _p=0$ otherwise. 

If~$\beta _p=3$, set~$\delta _{p+1}:=0$, $\beta _{p+1}:=3$ and~$k_{p+1}:=k_p+1$.

If instead~$\beta _p>3$, by~\eqref{newrieq}, we can choose~$k_{p+1}>k_p$ so large that 
$$\delta _{p+1}:=
\rho ^{k_{p+1}}\max\Big\{ D_0^{(0)}-D_1^{(0)}, D_0^{(0)}-D_2^{(0)}
\Big\}
\leq \min \left\{ 1,\frac{\delta _p}{2},\beta _p+1-4\sqrt{\frac{\beta _p}{3}}\right\},$$ and we stress that
the right hand side is positive because~$\beta _p>3$. 

Also, by~\eqref{newratest}, for~$k\geq k_{p+1}>k_p$, we have that~$D_0^{(k)}\le \beta_p$, and therefore
\begin{eqnarray*}&&D_0^{(k+1)}\leq 3+\frac{16}{3}\left(
1-\frac{1-\delta _{p+1}}{D_0^{(k)}+1-\delta _{p+1}}\right)
\left(1-\frac{4-\delta _{p+1}}{D_0^{(k)}+1-\delta _{p+1}}\right)\\
&&\qquad\leq 3+\frac{16}{3}\left(1-\frac{1-\delta _{p+1}}{\beta _p+1-\delta _{p+1}}\right)\left(1-\frac{4-\delta _{p+1}}{\beta _p+1-\delta _{p+1}}\right)=:\beta _{p+1},\end{eqnarray*}
namely~$\beta _{p+1}$ is an upper bound for the sequence~$\{ D_0^{(k)}:k>k_{p+1}\}$. 

Also,
\begin{equation}\label{betaeq}\begin{split}
\beta _{p+1}-\beta _p=\;&\frac{16}{3}\cdot\frac{\beta _p (\beta _p-3)}{(\beta _p+1-\delta _{p+1})^2}-(\beta _p-3)\\=\;&
\frac{(\beta _p-3)}{3}\cdot\frac{16\beta _p -3(\beta _p+1-\delta _{p+1})^2}{(\beta _p+1-\delta _{p+1})^2}.\end{split}\end{equation}
We point out that, from the definition of~$\delta _{p+1}$,
we have that~$16\beta _p -3(\beta _p+1-\delta _{p+1})^2\le0$,
and thus~$\beta _{p+1}\le\beta _p$.
We conclude that the sequence of upper bounds~$\beta_p$ is
nonincreasing and bounded below by~$3$, and accordingly
the following limit exists:
$$\beta_\infty :=\lim_{p\rightarrow +\infty}\beta _p $$
with~$\beta_\infty\ge3$.

Moreover, evidently, $$\lim_{p\rightarrow+ \infty}\delta _p=0.$$
As a result, taking limits in~\eqref{betaeq}, 
$$0=\frac{(\beta_\infty-3)}{3}\cdot\frac{16\beta_\infty -3(\beta_\infty+1)^2}{(\beta_\infty+1)^2}=-\frac{(\beta_\infty-3)^2(3\beta_{\infty} -1)}{3(\beta_\infty+1)^2}.$$
This entails that~$\beta_\infty=3$.

Hence, for~$j=1,2$, we have that
$$3\leq \lim_{k\rightarrow+ \infty} D_j^{(k)}\leq  \lim_{k\rightarrow +\infty} D_0^{(k)}=3,$$
namely $$\lim_{k\rightarrow +\infty}d_i^{(k)}=\sqrt{3},$$
yielding the desired result.
\end{proof}

\section*{Acknowledgements}
This work has been supported by the Australian Future Fellowship FT230100333 and by the Australian Laureate Fellowship FL190100081.

\begin{bibdiv}
\begin{biblist}

\bib{MR4727700}{article}{
   author={Dipierro, Serena},
   author={Noakes, Lyle},
   author={Valdinoci, Enrico},
   title={Napoleonic triangles on the sphere},
   journal={Bull. Braz. Math. Soc. (N.S.)},
   volume={55},
   date={2024},
   number={2},
   pages={Paper No. 19, 17},
   issn={1678-7544},
   review={\MR{4727700}},
   doi={10.1007/s00574-024-00393-9},
}

\bib{MR2178}{article}{
   author={Douglas, Jesse},
   title={On linear polygon transformations},
   journal={Bull. Amer. Math. Soc.},
   volume={46},
   date={1940},
   pages={551--560},
   issn={0002-9904},
   review={\MR{2178}},
   doi={10.1090/S0002-9904-1940-07259-3},
}

\bib{MRev}{report}{
   author={Fisher, J. Chris},
   title={MathSciNet Review MR1847491},
      review={\MR{1847491}}
   date={2001},
}

\bib{MR2928662}{article}{
   author={Gr\"{u}nbaum, Branko},
   title={Is Napoleon's theorem {\it really} Napoleon's theorem?},
   journal={Amer. Math. Monthly},
   volume={119},
   date={2012},
   number={6},
   pages={495--501},
   issn={0002-9890},
   review={\MR{2928662}},
   doi={10.4169/amer.math.monthly.119.06.495},
}

\bib{MR1205776}{book}{
   author={Iversen, Birger},
   title={Hyperbolic geometry},
   series={London Mathematical Society Student Texts},
   volume={25},
   publisher={Cambridge University Press, Cambridge},
   date={1992},
   pages={xiv+298},
   isbn={0-521-43508-0},
   isbn={0-521-43528-5},
   review={\MR{1205776}},
   doi={10.1017/CBO9780511569333},
}

\bib{MR1847491}{article}{
   author={McKay, Angela},
   title={An analogue of Napoleon's theorem in the hyperbolic plane},
   journal={Canad. Math. Bull.},
   volume={44},
   date={2001},
   number={3},
   pages={292--297},
   issn={0008-4395},
   review={\MR{1847491}},
   doi={10.4153/CMB-2001-029-3},
}

\bib{MR4298718}{book}{
   author={Nahin, Paul J.},
   title={When least is best---how mathematicians discovered many clever
   ways to make things as small (or as large) as possible},
   series={Princeton Science Library},
   note={Paperback edition of [ 2022170]},
   publisher={Princeton University Press, Princeton, NJ},
   date={2021},
   pages={xxxii+372},
   isbn={978-0-691-21876-2},
   review={\MR{4298718}},
}

\bib{MR6839}{article}{
   author={Neumann, Bernhard H.},
   title={Some remarks on polygons},
   journal={J. London Math. Soc.},
   volume={16},
   date={1941},
   pages={230--245},
   issn={0024-6107},
   review={\MR{6839}},
   doi={10.1112/jlms/s1-16.4.230},
}

\bib{zbMATH02641104}{article}{
 Author = {Petr, Karel},
 Title = {Ein {Satz} {\"u}ber {Vielecke}},
 Journal = {Arch. der Math. u. Phys. (3)},
 Volume = {13},
 Pages = {29--31},
date = {1908},
 zbMATH = {2641104},
 JFM = {39.0563.04}
}

\end{biblist}
\end{bibdiv}
\vfill

\end{document}